\definecolor{mycolor1}{rgb}{0.105882,0.619608,0.466667}
\definecolor{mycolor2}{rgb}{0.85098,0.372549,0.00784314}
\definecolor{mycolor3}{rgb}{0.458824,0.439216,0.701961}
\definecolor{mycolor4}{rgb}{0.905882,0.160784,0.541176}
\definecolor{mycolor5}{rgb}{0.4,0.65098,0.117647}
\definecolor{mycolor6}{rgb}{0.65098,0.462745,0.113725}
\definecolor{mycolor7}{rgb}{0.901961,0.670588,0.00784314}
\definecolor{mycolor8}{rgb}{0.4,0.4,0.4}
\definecolor{mycolor9}{rgb}{0.301961,0,0.294118}
\definecolor{mycolor10}{rgb}{0.0313725,0.25098,0.505882}
\newif\ifmygrid@coordinates
\tikzset{/mygrid/step line/.style={line width=0.80pt,draw=gray!80},
         /mygrid/steplet line/.style={line width=0.25pt,draw=gray!80}}
\def\mygrid@def@coordinates(#1,#2)(#3,#4){%
    \def\mygrid@xlo{#1}%
    \def\mygrid@xhi{#3}%
    \def\mygrid@ylo{#2}%
    \def\mygrid@yhi{#4}%
}
\newcommand\DrawGrid[3][]{%
    \pgfkeys{/mygrid/.cd,coordinates=true,step=1,steplet=0.2,#1}%
    \draw[/mygrid/steplet line] #2 grid[step=\mygrid@steplet] #3;
    \draw[/mygrid/step line] #2 grid[step=\mygrid@step] #3;
    \mygrid@def@coordinates#2#3%
    \ifmygrid@coordinates%
        \draw[/mygrid/step line]
        \foreach \xpos in {\mygrid@xlo,...,\mygrid@xhi} {%
          (\xpos,\mygrid@ylo) -- ++(0,-3pt)
                              node[anchor=north] {$\xpos$}
        }
        \foreach \ypos in {\mygrid@ylo,...,\mygrid@yhi} {%
          (\mygrid@xlo,\ypos) -- ++(-3pt,0)
                              node[anchor=east] {$\ypos$}
        };
    \fi%
}
\def\@seccntformat#1{\protect\makebox[0pt][r]{\csname the#1\endcsname\hspace{12pt}}}\makeatother
\newcommand{\remove}[1]{}
\newcommand{\removesafe}[1]{}
\newcommand{\transpose}{^\top\! }
\newcommand{\inner}[2]{\left\langle{#1},{#2}\right\rangle}
\newcommand{\innersmall}[2]{\langle{#1},{#2}\rangle}
\newcommand{\trace}{\mathrm{Tr}}
\newcommand{\Trace}{\mathrm{Tr}}
\newcommand{\Proj}{\mathrm{Proj}}
\newcommand{\Exp}{\mathrm{Exp}}
\newcommand{\Retr}{\mathrm{Retr}}
\newcommand{\T}{\mathrm{T}}
\newcommand{\Rnn}{{\mathbb{R}^{n\times n}}}
\newcommand{\Rnp}{{\mathbb{R}^{n\times p}}}
\newcommand{\Rp}{{\mathbb{R}^{p}}}
\newcommand{\reals}{{\mathbb{R}}}
\newcommand{\Rn}{{\mathbb{R}^n}}
\newcommand{\Rm}{{\mathbb{R}^m}}
\newcommand{\grad}{\mathrm{grad}}
\newcommand{\Hess}{\mathrm{Hess}}
\newcommand{\diag}{\mathrm{diag}}
\newcommand{\D}{\mathrm{D}}
\newcommand{\dt}{\mathrm{d}t}
\newcommand{\ddt}{\frac{\mathrm{d}}{\mathrm{d}t}}
\newcommand{\dds}{\frac{\mathrm{d}}{\mathrm{d}s}}
\newcommand{\ds}{\mathrm{d}s}
\newcommand{\calE}{\mathcal{E}}
\newcommand{\calO}{\mathcal{O}}
\newcommand{\calM}{\mathcal{M}}
\newcommand{\Id}{\operatorname{Id}} 
\newcommand{\opnorm}[1]{\left\|{#1}\right\|_\mathrm{2}}
\newcommand{\frobnormsmall}[1]{\|{#1}\|_\mathrm{F}}
\newcommand{\lambdamin}{\lambda_\mathrm{min}}
\newcommand{\dist}{\mathrm{dist}}
\newcommand{\II}{I\!I}
\newtheorem{theorem}{Theorem} 
\newtheorem{lemma}[theorem]{Lemma}
\newtheorem{proposition}[theorem]{Proposition}
\newtheorem{corollary}[theorem]{Corollary}
\newtheorem{assumption}{A\ignorespaces} 
\newtheorem{definition}{Definition} 
\newtheorem{remark}[theorem]{Remark}
\newcommand{\aref}[1]{\hyperref[#1]{A\ref{#1}}}
\newcommand{\In}{I}
\renewcommand{\L}{L_g}
\newcommand{\Lg}{L_g}
\newcommand{\LH}{L_H}
\title{Global rates of convergence\\for nonconvex optimization on manifolds}
\author{Nicolas Boumal\thanks{Mathematics Department and PACM, Princeton University, Princeton, NJ, USA.} \and P.-A.\ Absil\thanks{ICTEAM Institute, Universit\'e catholique de Louvain, Louvain-la-Neuve, Belgium.} \and Coralia Cartis\thanks{Mathematical Institute, University of Oxford, Oxford, UK.}}
\begin{document}

\maketitle

\begin{abstract}
	
We consider the minimization of a cost function $f$ on a manifold $\calM$ using Riemannian gradient descent and Riemannian trust regions (RTR). We focus on satisfying necessary optimality conditions within a tolerance $\varepsilon$. Specifically, we show that, under Lipschitz-type assumptions on the pullbacks of $f$ to the tangent spaces of~$\calM$, both of these algorithms produce points with Riemannian gradient smaller than $\varepsilon$ in $\mathcal{O}(1/\varepsilon^2)$ iterations. Furthermore, RTR returns a point where also the Riemannian Hessian's least eigenvalue is larger than $-\varepsilon$ in $\mathcal{O}(1/\varepsilon^3)$ iterations. There are no assumptions on initialization. The rates match their (sharp) unconstrained counterparts as a function of the accuracy $\varepsilon$ (up to constants) and hence are sharp in that sense.

These are the first deterministic results for global rates of convergence to approximate first- and second-order Karush--Kuhn--Tucker points on manifolds. They apply in particular for optimization constrained to compact submanifolds of $\Rn$, under simpler assumptions.

\vspace{2mm}
{\scriptsize
Published in IMA Journal of Numerical Analysis, \url{https://doi.org/10.1093/imanum/drx080}.
}

\end{abstract}

\section{Introduction}

Optimization on manifolds is concerned with solving nonlinear and typically nonconvex computational problems of the form
\begin{align}
	\min_{x \in \calM} \ f(x),
	\label{eq:P}
	\tag{P}
\end{align}
where $\calM$ is a (smooth) Riemannian manifold and $f \colon \calM \to \reals$ is a (sufficiently smooth) cost function~\citep{Gabay1982,smith1994optimization,edelman1998geometry,AMS08}. Applications abound in machine learning, computer vision, scientific computing, numerical linear algebra, signal processing, etc. In typical applications, $x$ is a matrix and $\calM$ could be a Stiefel manifold of orthonormal frames (including spheres and groups of rotations), a Grassmann manifold of subspaces,
a cone of positive definite matrices, or simply a Euclidean space such as $\Rn$.

The standard theory for optimization on manifolds takes the standpoint that optimizing on a manifold $\calM$ is not fundamentally different from optimizing in $\Rn$. Indeed, many classical algorithms from unconstrained nonlinear optimization such as gradient descent, nonlinear conjugate gradients, BFGS, Newton's method and trust-region methods~\citep{nocedal1999optimization,ruszczynski2006nonlinear} have been adapted to apply to the larger framework of~\eqref{eq:P}~\citep{adler2002spine,genrtr,AMS08,ring2012optimization,huang2015broyden,sato2014riemcg}.
Software-wise, a few general toolboxes for optimization on manifolds exist now, e.g., Manopt~\citep{manopt}, PyManopt~\citep{pymanopt} and ROPTLIB~\citep{ROPTLIB}.

As~\eqref{eq:P} is typically nonconvex, one does not expect general purpose, efficient algorithms to converge to global optima of~\eqref{eq:P} in general. Indeed, the class of problems~\eqref{eq:P} includes known NP-hard problems. Even computing \emph{local} optima is NP-hard in general~\citep[\S5]{vavasis1991nonlinear}.

Nevertheless, one may still hope to compute points of $\calM$ which satisfy first- and second-order necessary optimality conditions. These conditions take up the same form as in unconstrained nonlinear optimization, with \emph{Riemannian} notions of gradient and Hessian.
For $\calM$ defined by equality constraints, these conditions are equivalent to first- and second-order Karush--Kuhn--Tucker (KKT) conditions, but are simpler to manipulate because the Lagrangian multipliers are automatically determined.

The proposition below states these necessary optimality conditions. Recall that to each point $x$ of $\calM$ corresponds a tangent space (a linearization) $\T_x\calM$. The Riemannian gradient $\grad f(x)$ is the unique tangent vector at $x$ such that $\D f(x)[\eta] = \inner{\eta}{\grad f(x)}$ for all tangent vectors $\eta$, where $\inner{\cdot}{\cdot}$ is the Riemannian metric on $\T_x\calM$, and $\D f(x)[\eta]$ is the directional derivative of $f$ at $x$ along $\eta$. The Riemannian Hessian $\Hess f(x)$ is a symmetric operator on $\T_x\calM$, corresponding to the derivative of the gradient vector field with respect to the Levi--Civita connection---see~\citep[\S5]{AMS08}.
These objects are easily computed in applications.
A summary of relevant concepts about manifolds can be found in Appendix~\ref{apdx:manifolds}.

\begin{proposition}[Necessary optimality conditions]\label{prop:necessaryconditions}
	Let $x\in\calM$ be a local optimum for~\eqref{eq:P}. If~$f$ is differentiable at $x$, then $\grad f(x) = 0$. If $f$ is twice differentiable at $x$, then $\Hess f(x) \succeq 0$ (positive semidefinite).
\end{proposition}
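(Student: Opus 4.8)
The plan is to reduce the manifold problem to a family of one-dimensional problems by restricting $f$ to smooth curves through $x$, and then to invoke the classical necessary conditions for a scalar function of a real variable. For a tangent vector $\eta \in \T_x\calM$, I would pick any smooth curve $\gamma \colon (-\delta, \delta) \to \calM$ with $\gamma(0) = x$ and $\gamma'(0) = \eta$, and set $g(t) = f(\gamma(t))$. Since $x$ is a local minimum of $f$ on $\calM$, the point $t = 0$ is a local minimum of the real function $g$, so the usual scalar conditions $g'(0) = 0$ and (when $g$ is twice differentiable) $g''(0) \geq 0$ must hold. The content of the proposition is then just to translate these back into statements about $\grad f(x)$ and $\Hess f(x)$.

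For the first-order part, the chain rule gives $g'(0) = \D f(x)[\gamma'(0)] = \D f(x)[\eta] = \inner{\eta}{\grad f(x)}$, using the defining property of the Riemannian gradient. Hence $g'(0) = 0$ yields $\inner{\eta}{\grad f(x)} = 0$ for every $\eta \in \T_x\calM$. Choosing $\eta = \grad f(x)$ forces $\norm{\grad f(x)}^2 = 0$, so $\grad f(x) = 0$.

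For the second-order part, I would specialize the curve to the geodesic $\gamma(t) = \Exp_x(t\eta)$, which satisfies the zero-acceleration identity $\nabla_{\gamma'(t)}\gamma'(t) = 0$. Differentiating $g'(t) = \inner{\grad f(\gamma(t))}{\gamma'(t)}$ once more and using metric compatibility of the Levi--Civita connection gives
\[
g''(t) = \inner{\nabla_{\gamma'(t)}\grad f}{\gamma'(t)} + \inner{\grad f(\gamma(t))}{\nabla_{\gamma'(t)}\gamma'(t)} = \inner{\Hess f(\gamma(t))[\gamma'(t)]}{\gamma'(t)},
\]
where the second term vanishes by the geodesic property and the first is the Hessian quadratic form, by definition of $\Hess f = \nabla \grad f$. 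Evaluating at $t = 0$ and using $g''(0) \geq 0$ gives $\inner{\eta}{\Hess f(x)[\eta]} \geq 0$ for all $\eta$, i.e.\ $\Hess f(x) \succeq 0$. (Alternatively, any smooth curve may be used: the extra acceleration term $\inner{\grad f(x)}{\nabla_{\gamma'}\gamma'(0)}$ vanishes because $\grad f(x) = 0$ was already established in the first-order step, which applies since twice differentiable implies differentiable.)

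The only genuinely nontrivial step is the identification of $g''(0)$ with the Hessian quadratic form; this is where the geometry enters, through the properties of the Levi--Civita connection---metric compatibility together with the zero-acceleration characterization of geodesics---as recalled in Appendix~\ref{apdx:manifolds}. Everything else is an immediate transcription of the one-variable necessary conditions, so I expect this identification to be the main (and essentially only) obstacle.
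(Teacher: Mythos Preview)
Your argument is correct and is the standard textbook proof. The paper itself does not give a proof at all: it simply cites \cite[Rem.~4.2 and Cor.~4.2]{yang2012optimality} and adds the remark that differentiability (or even continuity) \emph{around} $x$ is not required---only at $x$. Your route, restricting to curves through $x$ and invoking the one-variable necessary conditions, is precisely the argument that reference carries out, so in substance you have unpacked the citation rather than taken a different path.

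One small point worth noting in light of the paper's remark: your second-order computation writes out $g''(t)$ for general $t$ and then evaluates at $t=0$, which tacitly assumes $\grad f$ exists and is differentiable along the curve near $x$, not merely at $x$. To match the paper's sharper pointwise hypothesis, it is cleaner to work directly with the second-order Taylor expansion of $g$ at $0$ together with the identification $g''(0) = \inner{\eta}{\Hess f(x)[\eta]}$, rather than differentiating $g'(t)$ at nearby $t$. This is a cosmetic refinement; the substance of your proof is sound.
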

\begin{proof}
	See~\cite[Rem.~4.2 and Cor.~4.2]{yang2012optimality}.
		\end{proof}
A point $x\in\calM$ which satisfies $\grad f(x) = 0$ is a \emph{(first-order) critical point} (also called a stationary point). If $x$ furthermore satisfies $\Hess f(x) \succeq 0$, it is a \emph{second-order critical point}.

Existing theory for optimization algorithms on manifolds is mostly concerned with establishing global convergence to critical points
without rates (where global means regardless of initialization), as well as local rates of convergence. For example, gradient descent is known to converge globally to critical points, and the convergence rate is linear once the iterates reach \emph{a sufficiently small neighborhood} of the limit point~\cite[\S4]{AMS08}.
Early work of \citet{udriste1994convex} on local convergence rates even bounds distance to optimizers as a function of iteration count, assuming initialization in a set where the Hessian of $f$ is positive definite, with lower and upper bounds on the eigenvalues; see also~\cite[Thm.~4.5.6, Thm.~7.4.11]{AMS08}. 
Such guarantees adequately describe the empirical behavior of those methods, but give no information about how many iterations are required to reach the local regime from an arbitrary initial point $x_0$; that is: the worst-case scenarios are not addressed.

For classical unconstrained nonlinear optimization, this caveat has been addressed by bounding the number of iterations required by known algorithms to compute points which satisfy necessary optimality conditions within some tolerance, without assumptions on the initial iterate.
Among others, \citet{nesterov2004introductory} gives a proof that, for $\calM = \Rn$ and Lipschitz differentiable $f$, gradient descent with an appropriate step-size computes a point $x$ where $\|\grad f(x)\| \leq \varepsilon$ in $\mathcal{O}(1/\varepsilon^2)$ iterations. This is sharp~\citep{cartis2010steepestdescent}. \citet{cartis2012complexity} prove the same for trust-region methods, and further show that if $f$ is twice Lipschitz continuously differentiable, then a point $x$ where $\|\grad f(x)\| \leq \varepsilon$ and $\Hess f(x) \succeq -\varepsilon \Id$ is computed in $\mathcal{O}(1/\varepsilon^3)$ iterations, also with examples showing sharpness.

In this paper, we extend the unconstrained results to the larger class of optimization problems on manifolds~\eqref{eq:P}. This work builds upon the original proofs~\citep{nesterov2004introductory,cartis2012complexity} and on existing adaptations of gradient descent and trust-region methods to manifolds~\citep{genrtr,AMS08}. One key step is the identification of a set of relevant Lipschitz-type regularity assumptions which allows the proofs to carry over from $\Rn$ to $\calM$ with relative ease.

\subsection*{Main results}

We state the main results here informally. We use the notion of \emph{retraction} $\Retr_x$ (see Definition~\ref{def:retraction} below), which allows to
map tangent vectors at $x$ to points on $\calM$.
Iterates are related by $x_{k+1} = \Retr_{x_k}(\eta_k)$ for some tangent vector $\eta_k$ at $x_k$ (the step). Hence, $f\circ \Retr_x$ is a lift of the cost function from $\calM$ to the tangent space at $x$. For $\calM = \Rn$, the standard retraction gives $\Retr_{x_k}(\eta_k) = x_k + \eta_k$. By $\|\cdot\|$, we denote the norm associated to the Riemannian metric. 
\paragraph{About gradient descent} (See Theorems~\ref{thm:lipschitzdescent} and~\ref{thm:armijodescent}.)
For problem~\eqref{eq:P}, if $f$ is bounded below on $\calM$ and $f \circ \Retr_x$ has Lipschitz gradient with constant $\L$ independent of $x$, then Riemannian gradient descent with constant step size $1/\L$ or with backtracking Armijo line-search returns $x$ with $\|\grad f(x)\| \leq \varepsilon$ in $\mathcal{O}(1/\varepsilon^2)$ iterations. 	

\paragraph{About trust regions} (See Theorem~\ref{thm:rtr-scnd-main}.)
For problem~\eqref{eq:P}, if $f$ is bounded below on $\calM$ and $f \circ \Retr_x$ has Lipschitz gradient with constant independent of $x$, then RTR returns $x$ with $\|\grad f(x)\| \leq \varepsilon_g$ in $\mathcal{O}(1/\varepsilon_g^2)$ iterations, under weak assumptions on the model quality. If further $f \circ \Retr_x$ has Lipschitz Hessian with constant independent of $x$, then RTR returns $x$ with $\|\grad f(x)\| \leq \varepsilon_g$ and $\Hess f(x) \succeq -\varepsilon_H \Id$ in $\mathcal{O}(\max\{1/\varepsilon_H^3, 1/\varepsilon_g^2 \varepsilon_H^{}\})$ iterations, provided the true Hessian is used in the model and a second-order retraction is used.

\paragraph{About compact submanifolds} (See Lemmas~\ref{lem:compactsub1} and~\ref{lem:compactsub2}.)
The first-order regularity conditions above hold in particular if $\calM$ is a compact submanifold of a Euclidean space $\calE$ (such as $\Rn$) and $f \colon \calE \to \reals$ has a locally Lipschitz continuous gradient. The second-order regularity conditions hold if furthermore $f$ has a locally Lipschitz continuous Hessian on $\calE$ and the retraction is second order (Definition~\ref{def:retraction2}).

\paragraph{}

Since the rates $\mathcal{O}(1/\varepsilon^2)$ and $\mathcal{O}(1/\varepsilon^3)$ are sharp for gradient descent and trust regions when $\calM = \Rn$~\citep{cartis2010steepestdescent,cartis2012complexity}, they are also sharp for $\calM$ a generic Riemannian manifold. Below, constants are given explicitly, thus precisely bounding the total amount of work required in the worst case to attain a prescribed tolerance.

The theorems presented here are the first deterministic results about the worst-case iteration complexity of computing (approximate) first- and second-order critical points on manifolds. The choice of analyzing Riemannian gradient descent and RTR first is guided by practical concerns, as these are among the most commonly used methods on manifolds so far. 
The proposed complexity bounds are particularly relevant when applied to problems for which second-order necessary optimality conditions are also sufficient. See for example~\citep{sun2015complete,sun2016geometric,boumal2015staircase,boumal2016nonconvexphase,bandeira2016lowrankmaxcut,bhojanapalli2016global,ge2016matrix} and the example in Section~\ref{sec:example}.

\subsection*{Related work}

The complexity of Riemannian optimization is discussed in a few recent lines of work. \citet{zhang2016complexitygeodesicallyconvex} treat geodesically convex problems over Hadamard manifolds. This is a remarkable extension of important pieces of classic convex optimization theory to manifolds with negative curvature. Because of the focus on geodesically convex problems, those results do not apply to the more general problem~\eqref{eq:P}, but have the clear advantage of guaranteeing global optimality. In \citep{zhang2016riemannian}, which appeared a day before the present paper on public repositories, the authors also study the iteration complexity of nonconvex optimization on manifolds.
Their results differ from the ones presented here in that they focus on \emph{stochastic} optimization algorithms, aiming for first-order conditions. Their results assume bounded curvature for the manifold. Furthermore, their analysis relies on the Riemannian exponential map, whereas we cover the more general class of retraction maps (which is computationally advantageous). We also do not use the notions of Riemannian parallel transport or logarithmic map, which, in our view, makes for a simpler analysis.

\citet{sun2015complete,sun2016geometric} consider dictionary learning and phase retrieval, and show that these problems, when appropriately framed as optimization on a manifold, are low dimensional and have no spurious local optimizers. They derive the complexity of RTR specialized to their application. In particular, they combine the global rate with a local convergence rate, which allows them to establish an overall better complexity than $\mathcal{O}(1/\varepsilon^3)$, but with an idealized version of the algorithm and restricted to these relevant applications. In this paper, we favor a more general approach, focused on algorithms closer to the ones implemented in practice.

Recent work by~\citet{bento2017iterationcomplexity} (which appeared after a first version of this paper) focuses on iteration complexity of gradient, subgradient and proximal point methods for the case of convex cost functions on manifolds, using the exponential map as retraction.

For the unconstrained case, optimal complexity bounds of order $\calO(1/\varepsilon^{1.5})$ to generate $x$ with $\|\grad f(x)\| \leq \varepsilon$ have also been given for cubic regularization methods~\citep{cartis2011adaptivecubic,cartis2011optimal}
and sophisticated trust region variants~\citep{CurtRobiSama14}. Bounds for regularization methods can be further improved given higher-order derivatives~\citep{bcgmt}.

Worst-case evaluation complexity bounds have been extended to constrained smooth problems in~\citep{cartis2014complexityconstrained,cartis2015evaluation,cartis2015evaluationcomplexity}. There, it is shown that
some carefully devised, albeit impractical, phase~1--phase~2 methods can compute approximate KKT points with global rates of convergence of the same order as in the unconstrained case.
We note that when the constraints are convex (but the objective may not be), practical, feasible methods have been devised~\citep{cartis2015evaluation} that connect to our approach below.
Second-order optimality for the case of convex constraints with nonconvex cost is recently addressed in~\citep{cartis2017secondorder}.

\section{Riemannian gradient descent methods}

Consider the generic Riemannian descent method described in Algorithm~\ref{algo:genericdescentframework}. We first prove that, provided sufficient decrease in the cost function is achieved at each iteration, the algorithm computes a point $x_k$ such that $\|\grad f(x_k)\| \leq \varepsilon$ with $k = \calO(1/\varepsilon^2)$. Then, we propose a Lipschitz-type assumption which is sufficient to guarantee that simple strategies to pick the steps $\eta_k$ indeed ensure sufficient decrease. The proofs parallel the standard ones~\cite[\S1.2.3]{nesterov2004introductory}. The main novelty is the careful extension to the Riemannian setting, which requires the well-known notion of retraction (Definition~\ref{def:retraction}) and the new assumption \aref{assu:lipschitzpullbackrestricted} (see below).

The step $\eta_k$ is a tangent vector to $\calM$ at $x_k$. Because $\calM$ is nonlinear (in general), the operation $x_k+\eta_k$ is undefined. The notion of \emph{retraction} provides a theoretically sound replacement. Informally, $x_{k+1} = \Retr_{x_k}(\eta_k)$ is \emph{a point on $\calM$ one reaches by moving away from $x_k$, along the direction $\eta_k$, while remaining on the manifold}. The Riemannian exponential map (which generates geodesics) is a retraction. The crucial point is that many other maps are retractions, often far less difficult to compute than the exponential. The definition of retraction below can be traced back to~\citet{shub1986some} and it appears under that name in~\citep{adler2002spine}; see also~\citep[Def.~4.1.1 and \S4.10]{AMS08} for additional references.
\begin{definition}[Retraction]\label{def:retraction}
	A \emph{retraction} on a manifold $\calM$ is a smooth mapping $\Retr$ from the tangent bundle\footnote{Informally, the tangent bundle $\T\calM$ is the set of all pairs $(x, \eta_x)$ where $x\in\calM$ and $\eta_x\in\T_x\calM$. See~\citep{AMS08} for a proper definition of $\T\calM$ and of what it means for $\Retr$ to be smooth.} $\T\calM$ to $\calM$ with the following properties. Let $\Retr_x \colon \T_x\calM \to \calM$ denote the restriction of $\Retr$ to $\T_x\calM$.
	\begin{enumerate}
	\item[(i)] $\Retr_x(0_x) = x$, where $0_x$ is the zero vector in $\T_x\calM$;
	\item[(ii)] The differential of $\Retr_x$ at $0_x$, $\D\Retr_x(0_x)$, is the identity map.
	\end{enumerate}
	These combined conditions ensure retraction curves $t \mapsto \Retr_x(t\eta)$ agree up to first order with geodesics passing through $x$ with velocity $\eta$, around $t=0$. Sometimes, we allow $\Retr_x$ to be defined only locally, in a closed ball of radius $\varrho(x) > 0$ centered at $0_x$ in $\T_x\calM$.
\end{definition}
In linear spaces
such as $\Rn$, the typical choice is $\Retr_x(\eta) = x+\eta$. On the sphere, a popular choice is $\Retr_x(\eta) = \frac{x+\eta}{\|x+\eta\|}$. \begin{remark} \label{rem:domainrestriction}
		If the retraction at $x_k$ is only defined in a ball of radius $\varrho_k = \varrho(x_k)$ around the origin in $\T_{x_k}\calM$, we limit the size of step $\eta_k$ to $\varrho_k$. Theorems in this section provide a complexity result provided $\varrho = \inf_k \varrho_k > 0$. If the \emph{injectivity radius} of the manifold is positive, retractions satisfying the condition $\inf_{x\in\calM} \varrho(x) > 0$ exist. In particular, compact manifolds have positive injectivity radius~\citep[Thm.\,III.2.3]{chavel2006riemannian}.
		The option to limit the step sizes is also useful when the constant $\L$ in~\aref{assu:lipschitzpullbackrestricted} below does not exist globally. \end{remark}

\begin{algorithm}[h]
	\caption{Generic Riemannian descent algorithm}
	\label{algo:genericdescentframework}
	\begin{algorithmic}[1]
		\State \textbf{Given:} $f\colon\calM\to\reals$ differentiable, a retraction $\Retr$ on $\calM$, $x_0 \in \calM$, $\varepsilon > 0$ 		\State \textbf{Init:} $k \leftarrow 0$
		\While{$\|\grad f(x_k)\| > \varepsilon$}
		\State Pick $\eta_k \in \T_{x_k}\calM$ (e.g., as in Theorem~\ref{thm:lipschitzdescent} or Theorem~\ref{thm:armijodescent})
		\State $x_{k+1} = \Retr_{x_k}(\eta_k)$
		\State $k \leftarrow k+1$
		\EndWhile
		\State \Return $x_k$ \Comment{$\|\grad f(x_k)\| \leq \varepsilon$}
	\end{algorithmic}
\end{algorithm}

The two central assumptions and a general theorem about Algorithm~\ref{algo:genericdescentframework} follow.
\begin{assumption}[Lower bound]\label{assu:lowerbound} 
	There exists $f^* > -\infty$ such that $f(x) \geq f^*$ for all $x \in \calM$.
\end{assumption}
\begin{assumption}[Sufficient decrease]\label{assu:sufficientdecrease}
	There exist $c, c' > 0$ such that, for all $k \geq 0$, 	\begin{align*}
		f(x_k) - f(x_{k+1}) \geq \min\left( c \|\grad f(x_k)\|, c'  \right) \|\grad f(x_k)\|.
	\end{align*}
\end{assumption}

\begin{theorem}\label{thm:masterdescent}
	Under~\aref{assu:lowerbound} and~\aref{assu:sufficientdecrease}, Algorithm~\ref{algo:genericdescentframework} returns $x\in\calM$ satisfying $f(x) \leq f(x_0)$ and $\|\grad f(x)\| \leq \varepsilon$ in at most
	\begin{align*}
		\left\lceil \frac{f(x_0)-f^*}{c} \cdot \frac{1}{\varepsilon^2} \right\rceil
	\end{align*}
	iterations, provided $\varepsilon \leq \frac{c'}{c}$. If $\varepsilon > \frac{c'}{c}$, at most $\left\lceil \frac{f(x_0)-f^*}{c'} \cdot \frac{1}{\varepsilon} \right\rceil$ iterations are required.
		\end{theorem}
\begin{proof}	
	If Algorithm~\ref{algo:genericdescentframework} executes $K-1$ iterations without terminating, then $\|\grad f(x_k)\| > \varepsilon$ for all $k$ in $0, \ldots, K-1$. Then, using \aref{assu:lowerbound} and \aref{assu:sufficientdecrease} in a classic telescoping sum argument gives:
	\begin{align*}
		f(x_0) - f^* \geq f(x_0) - f(x_K) & = \sum_{k=0}^{K-1} f(x_k) - f(x_{k+1}) > K \min(c\varepsilon, c') \varepsilon.
	\end{align*}
	By contradiction, the algorithm must have terminated if $K \geq \frac{f(x_0)-f^*}{\min(c\varepsilon, c')\varepsilon}$.
	\\\\\end{proof}

To ensure~\aref{assu:sufficientdecrease} with simple rules for the choice of $\eta_k$, it is necessary to restrict the class of functions $f$. For the particular case $\calM = \Rn$ and $\Retr_x(\eta) = x+\eta$, the classical assumption is to require $f$ to have a Lipschitz continuous gradient~\citep{nesterov2004introductory}, that is, existence of $\Lg$ such that: \begin{align}
	\forall x, y \in \Rn, \quad \| \grad f(x) - \grad f(y) \| \leq \Lg \|x - y\|.
	\label{eq:LipschitzGradientRn}
\end{align}
As we argue momentarily, generalizing this property to manifolds is impractical. On the other hand, it is well known that~\eqref{eq:LipschitzGradientRn} implies (see for example~\citep[Lemma\,1.2.3]{nesterov2004introductory}; see also~\citep[App.~A]{berger2017fastmatrixmult} for a converse):
\begin{align}
	\forall x, y \in \Rn, \quad \left| f(y) - \left[ f(x) + \inner{y-x}{\grad f(x)} \right] \right| \leq \frac{\Lg}{2} \|y-x\|^2.
	\label{eq:TaylorLipschitzRn}
\end{align}
It is the latter we adapt to manifolds. Consider the \emph{pullback}\footnote{The composition $f \circ \Retr_x$ is called the pullback because it, quite literally, pulls back the cost function $f$ from the manifold $\calM$ to the linear space $\T_x\calM$.} $\hat f_x = f \circ \Retr_{x} \colon \T_{x}\calM \to \reals$, conveniently defined on a vector space.
It follows from the definition of retraction that $\grad \hat f_x(0_{x}) = \grad f(x)$.\footnote{$\forall \eta \in \T_{x}\calM, \innersmall{\grad \hat f_x(0_{x})}{\eta} = \D \hat f_x(0_{x})[\eta] = \D f({x})[\D \Retr_{{x}}(0_{x})[\eta]] = \D f({x})[\eta] = \inner{\grad f({x})}{\eta}.$} Thinking of $x$ as $x_k$ and of $y$ as $\Retr_{{x_k}}(\eta)$, we require the following.
\begin{assumption}[Restricted Lipschitz-type gradient for pullbacks]\label{assu:lipschitzpullbackrestricted}
	There exists $\Lg \geq 0$ such that, for all $x_k$ among $x_0, x_1\ldots$ generated by a specified algorithm, the composition $\hat f_k = f \circ \Retr_{x_k}$ satisfies
	\begin{align}
	\big| \hat f_k(\eta) - \left[f(x_k) + \inner{\eta}{\grad f(x_k)}\right] \big| \leq \frac{\Lg}{2} \|\eta\|^2
	\label{eq:lipschitzpullbackLg}
	\end{align}
	for all $\eta \in \T_{x_k}\calM$ such that $\|\eta\| \leq \varrho_k$.\footnote{See Remark~\ref{rem:domainrestriction}; $\rho_k = \infty$ is valid if the retraction is globally defined and $f$ is sufficiently nice (e.g., Lemma~\ref{lem:compactsub1}).}
		In words, the pullbacks $\hat f_k$, possibly restricted to certain balls, are uniformly well approximated by their first-order Taylor expansions around the origin.
\end{assumption}

To the best of our knowledge, this specific assumption has not been used to analyze convergence of optimization algorithms on manifolds before. As will become clear, it allows for simple extensions of existing proofs in $\Rn$.

Notice that, if each $\hat f_k$ has a Lipschitz continuous gradient with constant $\L$ independent of $k$,\footnote{This holds in particular in the classical setting $\calM = \Rn$, $\Retr_x(\eta) = x+\eta$ and $\grad f$ is $\L$-Lipschitz.} then~\aref{assu:lipschitzpullbackrestricted} holds; but the reverse is not necessarily true as~\aref{assu:lipschitzpullbackrestricted} gives a special role to the origin. In this sense, the condition on $\hat f_k$ is weaker than Lipschitz continuity of the gradient of $\hat f_k$. On the other hand, we are requiring this condition to hold for all $x_k$ with the same constant $\L$. This is why we call the condition \emph{Lipschitz-type} rather than Lipschitz.

The following lemma states that if~$\calM$ is a compact submanifold of $\Rn$, then a sufficient condition for~\aref{assu:lipschitzpullbackrestricted} to hold
is for $f\colon\Rn\to\reals$ to have locally Lipschitz continuous gradient (so that it has Lipschitz continuous gradient on any compact subset of $\Rn$). The proof is in Appendix~\ref{apdx:submanifold}.
\begin{lemma}\label{lem:compactsub1}
	Let $\calE$ be a Euclidean space (for example, $\calE = \Rn$) and let $\calM$ be a compact Riemannian submanifold of $\calE$. Let $\Retr$ be a retraction on $\calM$ (globally\footnote{This is typically not an issue in practice. For example, globally defined, practical retractions are known for the sphere, Stiefel manifold, orthogonal group, their products and many others~\citep[\S4]{AMS08}. 
		} defined). If $f \colon \calE \to \reals$ has Lipschitz continuous gradient in the convex hull of $\calM$,
		then the pullbacks $f \circ \Retr_x$ satisfy~\eqref{eq:lipschitzpullbackLg} globally with some constant $\L$ independent of $x$; hence, \aref{assu:lipschitzpullbackrestricted} holds for any sequence of iterates and with $\varrho_k = \infty$ for all $k$.
		\end{lemma}

There are mainly two difficulties with generalizing~\eqref{eq:LipschitzGradientRn} directly to manifolds. Firstly, $\grad f(x)$ and $\grad f(y)$ live in two different tangent spaces, so that their difference is not defined; instead, $\grad f(x)$ must be \emph{transported} to $\T_y\calM$, which requires the introduction of a \emph{parallel transport} $\mathrm{P}_{x \rightarrow y} \colon \T_x\calM \to \T_y\calM$ along a minimal geodesic connecting $x$ and $y$. Secondly, the right hand side $\|x - y\|$ should become $\dist(x, y)$: the \emph{geodesic distance} on $\calM$. Both notions involve subtle definitions and transports may not be defined on all of $\calM$. Overall, the resulting
assumption would read as: there exists $\Lg$ such that
\begin{align}
	\forall x, y \in \calM, \quad \| \mathrm{P}_{x\to y} \grad f(x) - \grad f(y) \| \leq \L \dist(x, y).
	\label{eq:lipschitzimpractical}
\end{align}
It is of course possible to work with~\eqref{eq:lipschitzimpractical}---see for example~\citep[Def.~7.4.3]{AMS08} and recent work of~\citet{zhang2016complexitygeodesicallyconvex,zhang2016riemannian}---but we argue that it is conceptually and computationally advantageous to avoid it if possible. The computational advantage comes from the freedom in~\aref{assu:lipschitzpullbackrestricted} to work with any retraction, whereas parallel transport and geodesic distance are tied to the exponential map.

We note that, if the retraction is the exponential map, then it is known that~\aref{assu:lipschitzpullbackrestricted} holds if~\eqref{eq:lipschitzimpractical} holds---see for example~\citep[Def.~2.2 and Lemma~2.1]{bento2017iterationcomplexity}.

\subsection{Fixed step-size gradient descent method}

Leveraging the regularity assumption~\aref{assu:lipschitzpullbackrestricted}, an easy strategy is to pick the step $\eta_k$ as a fixed scaling of the negative gradient, possibly restricted to a ball of radius $\varrho_k$.
\begin{theorem}[Riemannian gradient descent with fixed step-size]\label{thm:lipschitzdescent}
	Under \aref{assu:lowerbound} and \aref{assu:lipschitzpullbackrestricted}, 	Algorithm~\ref{algo:genericdescentframework} with the explicit strategy
	\begin{align*}
		\eta_k = -\min\left( \frac{1}{\L}, \frac{\varrho_k}{\|\grad f(x_k)\|} \right) \grad f(x_k)
	\end{align*}
	returns a point $x\in\calM$ satisfying $f(x) \leq f(x_0)$ and $\|\grad f(x)\| \leq \varepsilon$ in at most
	\begin{align*}
		\left\lceil 2\big(f(x_0) - f^*\big)\L \cdot  \frac{1}{\varepsilon^2} \right\rceil
	\end{align*}
	iterations provided $\varepsilon \leq \varrho \L$, where $\varrho = \inf_k \rho_k$. If $\varepsilon > \varrho \L$, the algorithm succeeds in at most $\left\lceil 2\big(f(x_0) - f^*\big)  \frac{1}{\varrho} \cdot \frac{1}{\varepsilon} \right\rceil$ iterations.
	Each iteration requires one cost and gradient evaluation, and one retraction.
\end{theorem}
\begin{proof}
	The regularity assumption~\aref{assu:lipschitzpullbackrestricted} provides an upper bound for the pullback for all $k$:
	\begin{align}
		\forall \eta \in \T_{x_k}\calM \textrm{ with } \|\eta\| \leq \varrho_k, \quad f(\Retr_{x_k}(\eta)) \leq f(x_k) + \inner{\eta}{\grad f(x_k)} + \frac{\L}{2} \|\eta\|^2.
		\label{eq:upperboundpullback}
	\end{align}
	For the given choice of $\eta_k$ and using $x_{k+1} = \Retr_{x_k}(\eta_k)$, it follows easily that
	\begin{align*}
		f(x_{k}) - f(x_{k+1}) \\		\geq \min\left( \frac{\|\grad f(x_k)\|}{\L}, \varrho_k \right) \left[ 1 - \frac{\L}{2} \min\left( \frac{1}{\L}, \frac{\varrho_k}{\|\grad f(x_k)\|} \right) \right] \|\grad f(x_k)\|.
	\end{align*}
	The term in brackets is at least $1/2$.
	Thus, \aref{assu:sufficientdecrease} holds with $c = \frac{1}{2\L}$ and $c' = \frac{\varrho}{2}$, allowing to conclude with Theorem~\ref{thm:masterdescent}.
				 \end{proof}
\begin{corollary}
	If there are no step-size restrictions in Theorem~\ref{thm:lipschitzdescent} ($\rho_k \equiv \infty$), the explicit strategy
	$$
		\eta_k = -\frac{1}{\L} \grad f(x_k)
	$$
	returns a point $x\in\calM$ satisfying $f(x) \leq f(x_0)$ and $\|\grad f(x)\| \leq \varepsilon$ in at most
	$$
		\left\lceil 2\big(f(x_0) - f^*\big)\L \cdot  \frac{1}{\varepsilon^2} \right\rceil
	$$
	iterations for any $\varepsilon > 0$.
\end{corollary}

\subsection{Gradient descent with backtracking Armijo line-search}

The following lemma shows that a basic Armijo-type backtracking line-search, Algorithm~\ref{algo:armijo}, computes a step $\eta_k$ satisfying \aref{assu:sufficientdecrease} in a bounded number of function calls, without the need to know $\L$. The statement allows search directions other than $-\grad f(x_k)$, provided they remain ``related'' to $-\grad f(x_k)$. This result is well known in the Euclidean case and carries over seamlessly under \aref{assu:lipschitzpullbackrestricted}.

\begin{algorithm}[h]
	\caption{Backtracking Armijo line-search}  	\label{algo:armijo}
	\begin{algorithmic}[1]
		\State \textbf{Given:} $x_k \in \calM$, $\eta_k^0 \in \T_{x_k}\calM$, ${\bar t}_k > 0$, $c_1 \in (0, 1)$, $\tau \in (0, 1)$
		\State \textbf{Init:} $t \leftarrow {\bar t}_k$
		\While{$f(x_k) - f(\Retr_{x_k}(t \cdot \eta_k^0)) < c_1 t \inner{-\grad f(x_k)}{\eta_k^0}$}
		\State $t \leftarrow \tau \cdot t$
		\EndWhile
		\State \Return $t$ and $\eta_k = t \eta_k^0.$
	\end{algorithmic}
\end{algorithm}

\begin{lemma}\label{lem:armijo}
	For each iteration $k$ of Algorithm~\ref{algo:genericdescentframework}, let $\eta_k^0 \in \T_{x_k}\calM$ be the initial search direction to be considered for line-search. Assume there exist constants $c_2 \in (0, 1]$ and $0 < c_3 \leq c_4$ such that, for all $k$,
	\begin{align*}
		\inner{-\grad f(x_k)}{\eta_k^0} & \geq c_2 \|\grad f(x_k)\|\|\eta_k^0\| & \textrm{ and } & &
		c_3 \|\grad f(x_k)\| & \leq \|\eta_k^0\| \leq c_4 \|\grad f(x_k)\|.
	\end{align*}
	Under \aref{assu:lipschitzpullbackrestricted},
	backtracking Armijo (Algorithm~\ref{algo:armijo}) with initial stepsize ${\bar t}_k$ such that ${\bar t}_k \|\eta_k^0\| \leq \varrho_k$ returns a positive $t$ and $\eta_k = t\eta_k^0$ such that
	\begin{align}
		f(x_k) - f(\Retr_{x_k}(\eta_k)) & \geq c_1c_2c_3t\|\grad f(x_k)\|^2 & \textrm{ and } & & t & \geq \min\left({\bar t}_k, \frac{2\tau c_2 (1-c_1)}{c_4\L}\right)
	\label{eq:suffdecrease}
	\end{align}
	in
	\begin{align*}
		1+\log_\tau\left(t/{\bar t}_k\right) \leq \max\left( 1, 2 + \left\lceil \log_{\tau^{-1}}\left(\frac{c_4{\bar t}_k\L}{2c_2(1-c_1)}\right) \right\rceil \right)
			\end{align*}
	retractions and cost evaluations (not counting evaluation of $f$ at $x_k$).
\end{lemma}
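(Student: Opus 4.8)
The plan is to reduce the analysis to the one-dimensional behavior of the pullback along the ray $t \mapsto t\,\eta_k^0$, so that the argument mirrors the classical Euclidean backtracking proof with the Riemannian structure entirely absorbed into \aref{assu:lipschitzpullback}. The driving fact is the quadratic upper bound supplied by \aref{assu:lipschitzpullback} (as in~\eqref{eq:upperboundpullback}): for every $t > 0$,
\[
    f(\Retr_{x_k}(t\eta_k^0)) \leq f(x_k) + t\inner{\eta_k^0}{\grad f(x_k)} + \frac{L}{2}t^2\|\eta_k^0\|^2 .
\]
Subtracting this from $f(x_k)$ and comparing with the Armijo test, I would divide by $t>0$ and rearrange to find that the test is certainly passed once $\frac{L}{2}t\|\eta_k^0\|^2 \leq (1-c_1)\inner{-\grad f(x_k)}{\eta_k^0}$. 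Invoking the angle condition $\inner{-\grad f(x_k)}{\eta_k^0} \geq c_2\|\grad f(x_k)\|\|\eta_k^0\|$ and the upper norm bound $\|\eta_k^0\| \leq c_4\|\grad f(x_k)\|$ to lower-bound the right-hand side, I conclude that Armijo holds for every $t \leq t_\star := \frac{2c_2(1-c_1)}{c_4 L}$. This single threshold drives the whole lemma.

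Next I would read off the two displayed conclusions. On termination the while-condition is violated, so $f(x_k) - f(\Retr_{x_k}(\eta_k)) \geq c_1 t\inner{-\grad f(x_k)}{\eta_k^0}$ with $\eta_k = t\eta_k^0$; applying the angle condition and the lower norm bound $\|\eta_k^0\| \geq c_3\|\grad f(x_k)\|$ turns the right-hand side into $c_1 c_2 c_3 t\|\grad f(x_k)\|^2$, which is the first inequality in~\eqref{eq:suffdecrease}. For the lower bound on $t$, I would split into two cases. If the initial trial $\bar t$ is accepted, then $t = \bar t$. Otherwise backtracking occurred, so the last rejected trial equals $t/\tau$ and fails the Armijo test; by the threshold just established this forces $t/\tau > t_\star$, i.e.\ $t > \tau t_\star$. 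In either case $t \geq \min(\bar t, \tau t_\star) = \min(\bar t, \frac{2\tau c_2(1-c_1)}{c_4 L})$, matching~\eqref{eq:suffdecrease}.

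Finally I would bound the number of trials. The trial step-sizes are $\bar t, \tau\bar t, \tau^2\bar t, \dots$, and by the threshold the loop must stop no later than the first index $j$ with $\tau^j\bar t \leq t_\star$; when $\bar t > t_\star$ this is $j = \lceil \log_{\tau^{-1}}(\bar t / t_\star)\rceil = \lceil \log_{\tau^{-1}}(\tfrac{c_4\bar t L}{2c_2(1-c_1)})\rceil$, and when $\bar t \leq t_\star$ it is $j = 0$. Since each trial costs exactly one retraction and one cost evaluation, and the returned step $t = \tau^{j^*}\bar t$ satisfies $1 + \log_\tau(t/\bar t) = 1 + j^*$ with $j^*$ no larger than the above $j$, the number of evaluations is bounded by the stated quantity (the outer $\max$ with $1$ covering the case $\bar t \leq t_\star$, and the slack in the $+2$ covering the rounding). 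The Riemannian content is confined to the pullback upper bound, so I do not expect a genuine obstacle here; the one step that really matters is the derivation of the threshold $t_\star$ and the observation that \aref{assu:lipschitzpullback} makes $L$ (hence $t_\star$) uniform over all iterates $x_k$, which is exactly what keeps the per-iteration work uniformly bounded.
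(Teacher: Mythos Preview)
Your proposal is correct and follows essentially the same approach as the paper's proof: both use the quadratic upper bound from \aref{assu:lipschitzpullback} to derive the threshold $t_\star = \frac{2c_2(1-c_1)}{c_4 L}$ below which Armijo is guaranteed, then chain the angle and norm conditions to obtain the sufficient decrease, and finally use the geometric structure of the trial sequence to bound $t$ below and count evaluations. The organization differs only cosmetically.
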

\begin{proof}
	See Appendix~\ref{apdx:lem:armijo}
\end{proof}
The previous discussion can be particularized to bound the amount of work required by a gradient descent method using a backtracking Armijo line-search on manifolds. The constant $\L$ appears in the bounds but needs not be known. Note that, at iteration $k$, the last cost evaluation of the line-search algorithm is the cost at $x_{k+1}$: it needs not be recomputed.
\begin{theorem}[Riemannian gradient descent with backtracking line-search]\label{thm:armijodescent}
	Under~\aref{assu:lowerbound} and~\aref{assu:lipschitzpullbackrestricted}, Algorithm~\ref{algo:genericdescentframework} with Algorithm~\ref{algo:armijo} for line-search using initial search direction $\eta_k^0 = -\grad f(x_k)$ with parameters $c_1, \tau$ and  ${\bar t}_k \triangleq \min\left( {\bar t}, \varrho_k / \|\grad f(x_k)\| \right)$ for some $\bar t > 0$
	returns a point $x\in\calM$ satisfying $f(x) \leq f(x_0)$ and $\|\grad f(x)\| \leq \varepsilon$ in at most
	\begin{align*}
		\left\lceil \frac{f(x_0) - f^*}{c_1 \min\left(\bar t, \frac{2\tau(1-c_1)}{\L}\right)} \cdot  \frac{1}{\varepsilon^2} \right\rceil
	\end{align*}
	iterations, provided $\varepsilon \leq \frac{\varrho}{\min\left(\bar t, \frac{2\tau (1-c_1)}{\L} \right)} \triangleq c$, where $\varrho = \inf_k \varrho_k$. If $\varepsilon > c$, the algorithm succeeds in at most $\left\lceil \frac{f(x_0) - f^*}{c_1 \varrho} \cdot  \frac{1}{\varepsilon} \right\rceil$ iterations.
	After computing $f(x_0)$ and $\grad f(x_0)$, each iteration requires one gradient evaluation and at most $\max\left( 1, 2 + \left\lceil \log_{\tau^{-1}}\left(\frac{{\bar t}\L}{2(1-c_1)}\right) \right\rceil \right)$ cost evaluations and retractions.
\end{theorem}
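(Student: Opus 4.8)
The plan is to obtain this theorem as a direct chaining of Lemma~\ref{lem:armijo} and Theorem~\ref{thm:masterdescent}, so no new machinery is needed. First I would verify that the steepest-descent choice $\eta_k^0 = -\grad f(x_k)$ is a special case of the search directions admitted by Lemma~\ref{lem:armijo}. Since $\inner{-\grad f(x_k)}{\eta_k^0} = \|\grad f(x_k)\|^2$ and $\|\eta_k^0\| = \|\grad f(x_k)\|$, both hypotheses of that lemma hold with $c_2 = c_3 = c_4 = 1$, so Lemma~\ref{lem:armijo} applies unchanged at every iteration.

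Next I would extract sufficient decrease by specializing the conclusion of Lemma~\ref{lem:armijo} to these constants. The line-search then returns a step $\eta_k = t\eta_k^0$ with $t \geq \min\left(\bar t, \frac{2\tau(1-c_1)}{L}\right)$ and
\begin{align*}
	f(x_k) - f(\Retr_{x_k}(\eta_k)) \geq c_1 t \|\grad f(x_k)\|^2 \geq c_1 \min\left(\bar t, \frac{2\tau(1-c_1)}{L}\right) \|\grad f(x_k)\|^2,
\end{align*}
where the final inequality uses the lower bound on $t$ together with $c_1 \|\grad f(x_k)\|^2 \geq 0$. This is precisely \aref{assu:sufficientdecrease} with the constant $c = c_1 \min\left(\bar t, \frac{2\tau(1-c_1)}{L}\right)$.

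Then I would invoke Theorem~\ref{thm:masterdescent}. Assumption \aref{assu:lowerbound} holds by hypothesis, and \aref{assu:sufficientdecrease} has just been established with the above $c$; the master theorem therefore delivers the iteration bound $\left\lceil \frac{f(x_0)-f^*}{c}\cdot\frac{1}{\varepsilon^2}\right\rceil$, which matches the claimed expression after substituting $c$, and simultaneously guarantees that the returned point satisfies $f(x) \leq f(x_0)$ and $\|\grad f(x)\| \leq \varepsilon$.

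Finally, for the per-iteration work I would combine the retraction and cost-evaluation count from Lemma~\ref{lem:armijo} (again with $c_2 = c_4 = 1$, giving the stated $\max\left(1, 2 + \left\lceil \log_{\tau^{-1}}\left(\frac{{\bar t}L}{2(1-c_1)}\right)\right\rceil\right)$) with a careful bookkeeping of what must be freshly computed. I do not anticipate any real obstacle: the whole argument is a specialize-and-chain of the two preceding results. The one point that needs \emph{care} rather than ingenuity is not to double-count function evaluations across iterations: the value $f(x_k)$ required by the line-search at iteration $k$ is exactly the value produced by the accepted trial at iteration $k-1$ (and is $f(x_0)$ for the first iteration), so it is reused rather than recomputed; what each iteration genuinely adds is a single gradient evaluation, used to form the next search direction, plus the line-search's cost evaluations and retractions bounded above. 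This justifies the ``after computing $f(x_0)$ and $\grad f(x_0)$'' phrasing and completes the accounting.
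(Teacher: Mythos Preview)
Your proposal is correct and follows essentially the same approach as the paper: specialize Lemma~\ref{lem:armijo} with $c_2=c_3=c_4=1$, read off the sufficient-decrease constant $c = c_1\min\left(\bar t,\tfrac{2\tau(1-c_1)}{L}\right)$, and plug into Theorem~\ref{thm:masterdescent}. Your bookkeeping of cost evaluations is also the paper's: the final accepted trial in the line-search at iteration $k$ yields $f(x_{k+1})$, which is reused rather than recomputed at iteration $k+1$.
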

\begin{proof}
	Using $\eta_k^0 = -\grad f(x_k)$, one can take $c_2 = c_3 = c_4 = 1$ in Lemma~\ref{lem:armijo}. Eq.~\eqref{eq:suffdecrease} in that lemma combined with  the definition of ${\bar t}_k$ ensures
	\begin{align*}
		f(x_k) - f(x_{k+1}) \geq c_1 \min\left( {\bar t}, \frac{2\tau(1-c_1)}{\L}, \frac{\varrho_k}{\|\grad f(x_k)\|} \right) \| \grad f(x_k) \|^2.
	\end{align*}
	Thus,~\aref{assu:sufficientdecrease} holds with $c = c_1 \min\left( {\bar t}, \frac{2\tau(1-c_1)}{\L} \right)$ and $c' = c_1 \varrho$. Conclude with Theorem~\ref{thm:masterdescent}.
\end{proof}

\section{Riemannian trust-region methods}

The Riemannian trust-region method (RTR) is a generalization of the classical trust-region method to manifolds~\citep{genrtr,conn2000trust}---see Algorithm~\ref{algo:RTRscnd}.
The algorithm is initialized with a point $x_0\in\calM$ and a trust-region radius $\Delta_0$. At iteration $k$, the pullback $\hat f_k = f \circ \Retr_{x_k}$ is approximated by a model $\hat m_k \colon \T_{x_k}\calM \to \reals$,
\begin{align}
	\hat m_k(\eta) = f(x_k) + \inner{\eta}{\grad f(x_k)} + \frac{1}{2} \inner{\eta}{H_k[\eta]},
	\label{eq:modelhatm}
\end{align}
where $H_k \colon \T_{x_k}\calM \to \T_{x_k}\calM$ is a map chosen by the user.
The tentative step $\eta_k$ is obtained by approximately solving the associated trust-region subproblem:
\begin{align}
	\min_{\eta \in \T_{x_k}\calM} \ \hat m_k(\eta) \quad \textrm{ subject to } \quad  \|\eta\| \leq \Delta_k.
	\label{eq:TRproblem}
\end{align}
The candidate next iterate $x_k^+ = \Retr_{x_k}(\eta_k)$ is accepted ($x_{k+1} = x_k^+$) if the actual cost decrease $f(x_k) - f(x_k^+)$ is a sufficiently large fraction of the model decrease $\hat m_k(0_{x_k}) - \hat m_k(\eta_k)$. Otherwise, the candidate is rejected ($x_{k+1} = x_k$). Depending on the level of agreement between the model decrease and actual decrease, the trust-region radius $\Delta_k$ can be reduced, kept unchanged or increased, but never above some parameter $\bar\Delta$. The parameter $\bar\Delta$ can be used in particular in case of a non-globally defined retraction or if the regularity conditions on the pullbacks hold only locally.

We establish worst-case iteration complexity bounds for the computation of points $x\in\calM$ such that $\|\grad f(x)\| \leq \varepsilon_g$ and $\Hess f(x) \succeq -\varepsilon_H \Id$, where $\Hess f(x)$ is the Riemannian Hessian of $f$ at $x$. Besides Lipschitz-type conditions on the problem itself, essential algorithmic requirements are that (i) the models $\hat m_k$ should agree sufficiently with the pullbacks $\hat f_k$ (locally); and (ii) sufficient decrease in the model should be achieved at each iteration. The analysis presented here is a generalization of the one in~\citep{cartis2012complexity} to manifolds.

\begin{algorithm}[t]
	\caption{Riemannian trust regions (RTR), modified to attain second-order optimality}
	\label{algo:RTRscnd}
	\begin{algorithmic}[1]
		\State \textbf{Parameters: } $\bar{\Delta} > 0$, $0 < \rho' < 1/4$, $\varepsilon_g > 0$, $\varepsilon_H > 0$
		\State \textbf{Input:} $x_0 \in \calM$, $0 < \Delta_0 \leq \bar{\Delta}$
		\State \textbf{Init:} $k \leftarrow 0$
		\While{\textbf{true}}
		\Statex 
		\If{$\|\grad f(x_k)\| > \varepsilon_g$} \Comment{First-order step.}
		\State Obtain $\eta_k \in \T_{x_k}\calM$ satisfying~\aref{assu:rtr-Cauchy-step} (e.g., Lemma~\ref{lem:CauchyStep})
		\ElsIf{$\varepsilon_H < \infty$} \Comment{Second-order step.}
		\If{$\lambdamin(H_k) < -\varepsilon_H$} \label{step:lambdamincheck}
		\State Obtain $\eta_k \in \T_{x_k}\calM$ satisfying~\aref{assu:rtr-eigenstep} (e.g., Lemma~\ref{lem:EigenStep})
		\Else
		\State \Return $x_k$ \Comment{$\|\grad f(x_k)\| \leq \varepsilon_g$ and $\lambdamin(H_k) \geq -\varepsilon_H$.}
		\EndIf
		\Else
		\State \Return $x_k$ \Comment{$\|\grad f(x_k)\| \leq \varepsilon_g$.}
		\EndIf
		\Statex 
		\State Compute
		\begin{align}
			\rho_k & = \frac{\hat f_k(0_{x_k}) - \hat f_k(\eta_k)}{\hat m_k(0_{x_k}) - \hat m_k(\eta_k)}
			\label{eq:rhok}
		\end{align}
		\Statex
		\State $\Delta_{k+1} = \begin{cases}
		\frac{1}{4} \Delta_k & \textrm{ if } \rho_k < \frac{1}{4} \textrm{ (poor model-cost agreement),} \\
		\min\left( 2\Delta_k, \bar{\Delta} \right) & \textrm{ if } \rho_k > \frac{3}{4} \textrm{ and } \|\eta_k\| = \Delta_k \textrm{ (good agreement, limiting TR),} \\
		\Delta_k & \textrm{ otherwise.}
		\end{cases}$
		\Statex 
		\State $x_{k+1} = \begin{cases}
		\Retr_{x_k}(\eta_k) & \textrm{ if } \rho_k > \rho' \textrm{ (accept the step),} \\
		x_k & \textrm{ otherwise (reject).}
		\end{cases}$
		\Statex 
		\State $k \leftarrow k+1$
		\EndWhile
	\end{algorithmic}
\end{algorithm}

\subsection{Regularity assumptions}

In what follows, for iteration $k$, we make assumptions involving the ball of radius $\Delta_k \leq \bar{\Delta}$ around $0_{x_k}$ in the tangent space at $x_k$.
If $\Retr_x$ is only defined in a ball of radius $\varrho(x)$, one (conservative) strategy to ensure $\varrho_k \geq \Delta_k$ as required in the assumption below is to set $\bar \Delta \leq \inf_{x\in\calM : f(x) \leq f(x_0)} \varrho(x)$, provided this is positive (see Remark~\ref{rem:domainrestriction}).
\begin{assumption}[Restricted Lipschitz-type gradient for pullbacks]\label{assu:rtr-regularity-first}
	Assumption~\aref{assu:lipschitzpullbackrestricted} holds in the respective trust regions of the iterates produced by Algorithm~\ref{algo:RTRscnd}, that is, with $\varrho_k \geq \Delta_k$.
\end{assumption}

\begin{assumption}[Restricted Lipschitz-type Hessian for pullbacks] \label{assu:rtr-regularity-second}
	If $\varepsilon_H < \infty$, there exists $\LH \geq 0$ such that, for all $x_k$ among $x_0, x_1\ldots$ generated by Algorithm~\ref{algo:RTRscnd} and such that $\|\grad f(x_k)\| \leq \varepsilon_g$, $\hat f_k$ satisfies
	\begin{align}
		\left| \hat f_k(\eta) - \left[ f(x_k) + \inner{\eta}{\grad f(x_k)} + \frac{1}{2}\innersmall{\eta}{\nabla^2 \hat f_k(0_{x_k})[\eta]} \right] \right| \leq \frac{\LH}{6} \|\eta\|^3
		\label{eq:secondorderreg}
	\end{align}
	for all \mbox{$\eta \in \T_{x_k}\calM$} such that $\|\eta\| \leq \Delta_k$.
\end{assumption}
As discussed in Section~\ref{sec:HandHess} below, if $\Retr$ is a \emph{second-order} retraction, then $\nabla^2 \hat f_k(0_{x_k})$ coincides with the Riemannian Hessian of $f$ at $x_k$.

In the previous section, Lemma~\ref{lem:compactsub1} gives a sufficient condition for~\aref{assu:rtr-regularity-first} to hold; we complement this statement with a sufficient condition for~\aref{assu:rtr-regularity-second} to hold as well. In a nutshell: if $\calM$ is a compact submanifold of $\Rn$ and $f\colon\Rn\to\reals$ has locally Lipschitz continuous Hessian, then both assumptions hold.
\begin{lemma}\label{lem:compactsub2}
	Let $\calE$ be a Euclidean space (for example, $\calE = \Rn$) and let $\calM$ be a compact Riemannian submanifold of $\calE$. Let $\Retr$ be a second-order retraction on $\calM$ (globally defined). If \mbox{$f \colon \calE \to \reals$} has Lipschitz continuous Hessian in the convex hull of $\calM$,
		then the pullbacks \mbox{$f \circ \Retr_x$} obey~\eqref{eq:secondorderreg} with some constant $\LH$ independent of $x$; hence, \aref{assu:rtr-regularity-second} holds for any sequence of iterates and trust-region radii.
\end{lemma}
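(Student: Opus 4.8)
The plan is to follow the blueprint of Lemma~\ref{lem:compactsub1}, raised by one order of differentiation. Write $g_x = f \circ \Retr_x$ and $R_x = \Retr_x$, viewing $R_x$ as a smooth map from a ball in $\T_x\calM$ into $\calE$; since $R_x(\eta) \in \calM$, all derivatives of $f$ are evaluated at points of $\calM$, which lie in the compact convex hull $\mathrm{conv}(\calM)$ where $f$ is assumed to have a Lipschitz continuous Hessian. The target is to show that $\eta \mapsto \nabla^2 g_x(\eta) = \D^2 g_x(\eta)$ is Lipschitz on $\{\eta : \|\eta\| \le \bar{\Delta}\}$ with a constant $\bar L$ independent of $x$; the assumption~\aref{assu:rtr-regularity-second} then follows from Taylor's theorem with integral remainder, which converts a $\bar L$-Lipschitz Hessian on a convex ball into the cubic bound $\frac{\bar L}{6}\|\eta\|^3$. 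Because the retraction is second order (Definition~\ref{def:retraction2}), $\nabla^2 g_x(0_x)$ coincides with $\Hess f(x)$ (Section~\ref{sec:HandHess}), so the estimate is genuinely about the Riemannian Hessian.

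The computation is a second-order chain rule: for $\eta$ in the ball and a unit $\dot\eta \in \T_x\calM$,
\begin{align*}
  \D^2 g_x(\eta)[\dot\eta, \dot\eta] = \nabla^2 f(R_x(\eta))[\D R_x(\eta)[\dot\eta], \D R_x(\eta)[\dot\eta]] + \inner{\nabla f(R_x(\eta))}{\D^2 R_x(\eta)[\dot\eta,\dot\eta]}.
\end{align*}
To bound the difference $\D^2 g_x(\eta_1) - \D^2 g_x(\eta_2)$ I would add and subtract intermediate terms so that each resulting summand isolates the increment of a single factor, then invoke: (i) Lipschitzness of $\nabla^2 f$ (assumed) and of $\nabla f$ (a consequence of $\nabla^2 f$ being bounded on $\mathrm{conv}(\calM)$); (ii) Lipschitzness of $\eta \mapsto R_x(\eta)$, $\eta\mapsto \D R_x(\eta)$ and $\eta\mapsto\D^2 R_x(\eta)$, which follow from uniform bounds on the fiber derivatives $\D R_x, \D^2 R_x, \D^3 R_x$; and (iii) uniform boundedness of the individual factors $\nabla f, \nabla^2 f, \D R_x, \D^2 R_x$ on their (compact) domains. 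Collecting terms yields a single constant $\bar L$, built from the $f$-bounds and the retraction bounds, that depends on neither $x$ nor $\eta_1, \eta_2$.

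The main obstacle is establishing the uniform-in-$x$ control (boundedness and Lipschitzness) of the fiberwise derivatives of the retraction, the difficulty being that the tangent spaces $\T_x\calM$ vary with $x$, so ``$\D^j R_x$ bounded uniformly in $x$'' needs a precise meaning. I would handle this exactly as in the proof of Lemma~\ref{lem:compactsub1}: cover $\calM$ by finitely many charts (using compactness), on each choose a smooth orthonormal frame $E_1(x),\dots,E_d(x)$ of $\T_x\calM$ (with $d = \dim\calM$), and study $F(x,a) = R_x\!\left(\sum_i a_i E_i(x)\right)$ for $\|a\| \le \bar{\Delta}$. Since $\Retr$ is smooth on the tangent bundle and we restrict to the compact set $\{(x,\eta): x\in\calM,\ \|\eta\| \le \bar{\Delta}\}$, the map $F$ and its partial derivatives in $a$ up to order three are continuous on a compact set, hence bounded; expressing $\D^j R_x$ through $F$ transfers these into the required uniform bounds. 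Once this uniform control is in hand, the remaining estimates in the preceding paragraph are routine products of bounded and Lipschitz factors, and the finite cover makes the resulting constant uniform over all of $\calM$.
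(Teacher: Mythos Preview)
Your proposal is correct, but it follows a different route from the paper's proof. The paper does not show that $\eta\mapsto\nabla^2\hat f_x(\eta)$ is Lipschitz and then invoke Taylor's theorem; instead it starts from the ambient cubic Taylor bound $|f(y)-[f(x)+\langle\nabla f(x),y-x\rangle+\tfrac12\langle y-x,\nabla^2 f(x)[y-x]\rangle]|\le\frac{L}{6}\|y-x\|^3$ with $y=\Retr_x(\eta)$, and then rewrites the first- and second-order Euclidean terms in Riemannian form by inserting $\grad f(x)=\Proj_x\nabla f(x)$ and $\langle\eta,\Hess f(x)[\eta]\rangle=\langle\eta,\nabla^2 f(x)[\eta]\rangle+\langle\II(\eta,\eta),\nabla f(x)\rangle$, where $\II$ is the second fundamental form. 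The resulting residuals are controlled by three retraction estimates: $\|y-x\|\le\alpha\|\eta\|$, $\|y-x-\eta\|\le\beta\|\eta\|^2$ (both reused from Lemma~\ref{lem:compactsub1}), and the new third-order bound $\|y-x-\eta-\tfrac12\II(\eta,\eta)\|\le\gamma\|\eta\|^3$, which is exactly where the second-order retraction hypothesis enters. Your approach is the more mechanical one: push the chain rule to second order and bound the Lipschitz constant factor by factor, needing uniform control on $\D R_x,\D^2 R_x,\D^3 R_x$. This works and is perhaps more transparent, though it only yields the bound on the ball $\|\eta\|\le\bar\Delta$ (sufficient for \aref{assu:rtr-regularity-second}), while the paper's argument, via the small-$\eta$/large-$\eta$ dichotomy and the diameter of $\calM$, gives the cubic bound for all $\eta\in\T_x\calM$. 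A minor correction: the proof of Lemma~\ref{lem:compactsub1} does not actually introduce charts or local frames; it obtains the uniform bounds directly from compactness of $\{\eta\in\T\calM:\|\eta\|\le r\}$ and smoothness of $\Retr$ on it, and the same device would suffice for your third-derivative bound.
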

The proof is in Appendix~\ref{apdx:submanifold}. Here too, if $\calM$ is a Euclidean space and $\Retr_x(\eta) = x+\eta$, then~\aref{assu:rtr-regularity-first} and~\aref{assu:rtr-regularity-second} are satisfied if $f$ has Lipschitz continuous Hessian in the usual sense.

\subsection{Assumptions about the models}

The model at iteration $k$ is the function $\hat m_k$~\eqref{eq:modelhatm} whose purpose is to approximate the pullback $\hat f_k = f \circ \Retr_{x_k}$. It involves a map $H_k \colon \T_{x_k}\calM \to \T_{x_k}\calM$. Depending on the type of step being performed (aiming for first- or second-order optimality conditions), we require different properties of the maps $H_k$. Conditions for first-order optimality are particularly lax.

\begin{assumption}\label{assu:rtr-Hk-first}
	If $\|\grad f(x_k)\| > \varepsilon_g$ (so that we are only aiming for a first-order condition at this step), then $H_k$ is radially linear. That is,
	\begin{align}
		\forall \eta\in\T_{x_k}\calM, \forall \alpha \geq 0, \quad H_k[\alpha \eta] = \alpha H_k[\eta].
	\end{align}
	Furthermore, there exists $c_0 \geq 0$ (the same for all first-order steps) such that
	\begin{align}
		\|H_k\| \triangleq \sup_{\eta \in \T_{x_k}\calM : \|\eta\| \leq 1} \inner{\eta}{H_k[\eta]} \leq c_0.
		\label{eq:Hkbound}
	\end{align}
	\end{assumption}
Radial linearity and boundedness are sufficient to ensure first-order agreement between $\hat m_k$ and $\hat f_k$. This relaxation from complete linearity of $H_k$---which would be the standard assumption---notably allows the use of nonlinear finite difference approximations of the Hessian~\citep{boumal2015rtrfd}. To reach second-order agreement, the conditions are stronger.
\begin{assumption}\label{assu:rtr-Hk-second}
	If $\|\grad f(x_k)\| \leq \varepsilon_g$ and $\varepsilon_H < \infty$ (so that we are aiming for a second-order condition), then $H_k$ is \emph{linear and symmetric}. Furthermore, $H_k$ is close to $\nabla^2 \hat f_k(0_{x_k})$
	along $\eta_k$
	in the sense that there exists $c_1 \geq 0$ (the same for all second-order steps) such that:
	\begin{align}
				\left| \inner{\eta_k}{\big(\nabla^2 \hat f_k(0_{x_k}) - H_k\big)[\eta_k]} \right| \leq \frac{c_1 \Delta_k}{3} \|\eta_k\|^2.
		\label{eq:assurtrHksecond}
	\end{align}
	\end{assumption}
The smaller $\Delta_k$, the more precisely $H_k$ must approximate the Hessian of the pullback along $\eta_k$.
	Lemma~\ref{lemma:rtrscnd-delta-lowerbound} (below) shows $\Delta_k$ is lower-bounded in relation with $\varepsilon_g$ and $\varepsilon_H$.

Eq.~\eqref{eq:assurtrHksecond} involves $\eta_k$, the ultimately chosen step which typically depends on $H_k$. The stronger condition below does not reference $\eta_k$ yet still ensures~\eqref{eq:assurtrHksecond} is satisfied:
\begin{align*}
		\left\|\nabla^2 \hat f_k(0_{x_k}) - H_k\right\| \leq \frac{c_1 \Delta_k}{3}.
\end{align*}
Refer to Section~\ref{sec:HandHess} to relate $H_k$, $\nabla^2 \hat f_k(0_{x_k})$ and $\Hess f(x_k)$.

\subsection{Assumptions about sufficient model decrease}

The steps $\eta_k$ can be obtained in a number of ways, leading to different local convergence rates and empirical performance.
As far as global convergence guarantees are concerned though, the requirements are modest. It is only required that, at each iteration, the candidate~$\eta_k$ induces sufficient decrease \emph{in the model}. Known explicit strategies achieve these decreases.
In particular, solving the trust-region subproblem~\eqref{eq:TRproblem} within some tolerance (which can be done in polynomial time if $H_k$ is linear~\citep[\S4.3]{vavasis1991nonlinear}) is certain to satisfy the assumptions. The Steihaug--Toint truncated conjugate gradients method is a popular choice~\citep{toint81towards,steihaug1983conjugate,conn2000trust,genrtr}. See also~\citep{sorensen1982newton,more1983trustregionstep} for more about the trust-region subproblem.
Here, we describe simpler yet satisfactory strategies.
For first-order steps, we require the following.
\begin{assumption}\label{assu:rtr-Cauchy-step}
	There exists $c_2 > 0$ such that, for all $k$ such that $\|\grad f(x_k)\| > \varepsilon_g$,
	the step $\eta_k$ satisfies
	\begin{align}
		\hat m_k(0_{x_k}) - \hat m_k(\eta_k) & \geq c_2 \min\left( \Delta_k, \frac{\varepsilon_g}{c_0} \right) \varepsilon_g.
	\end{align}
\end{assumption}
As is well known, the explicitly computable \emph{Cauchy step} satisfies this requirement. For convenience, let $g_k = \grad f(x_k)$. By definition, the Cauchy step minimizes $\hat m_k$~\eqref{eq:modelhatm} in the trust region along the steepest descent direction $-g_k$. Owing to radial linearity~(\aref{assu:rtr-Hk-first}), this reads:
\begin{align*}
	\min_{\alpha \geq 0} & \ \hat m_k(-\alpha g_k) = f(x_k) - \alpha \|g_k\|^2 + \frac{\alpha^2}{2} \inner{g_k}{H_k[g_k]} \\
	\textrm{ s.t.} & \ \alpha \|g_k\| \leq \Delta_k.\nonumber
\end{align*}
This corresponds to minimizing a quadratic in $\alpha$ over the interval $[0, \Delta_k / \|g_k\|]$. The optimal value is easily seen to be~\citep{conn2000trust}
\begin{align*}
	\alpha_k^C & =
	\begin{cases}
		\min\left( \frac{\|g_k\|^2}{\inner{g_k}{H_k[g_k]}}, \frac{\Delta_k}{\|g_k\|} \right) & \textrm{ if } \inner{g_k}{H_k[g_k]} > 0, \\
		\frac{\Delta_k}{\|g_k\|} & \textrm{ otherwise.}
	\end{cases}
\end{align*}
\begin{lemma}\label{lem:CauchyStep}
	Let $g_k = \grad f(x_k)$. Under~\aref{assu:rtr-Hk-first},
	setting $\eta_k$ to be the Cauchy step $\eta_k^C = -\alpha_k^C g_k$ for first-order steps fulfills~\aref{assu:rtr-Cauchy-step} with $c_2 = 1/2$. Computing $\eta_k^C$ involves one gradient evaluation and one application of $H_k$.
\end{lemma}
\begin{proof}
	The claim follows as an exercise from $\hat m_k(0_{x_k}) - \hat m_k(\eta_k^C) = \alpha_k^C \|g_k\|^2 - \frac{(\alpha_k^C)^2}{2}\inner{g_k}{H_k[g_k]}$ and  $\inner{g_k}{H_k[g_k]} \leq c_0 \|g_k\|^2$ owing to~\aref{assu:rtr-Hk-first}.
\end{proof}
The Steihaug--Toint truncated conjugate gradient method~\citep{toint81towards,steihaug1983conjugate} is a monotonically improving iterative method for the trust-region subproblem whose first iterate is the Cauchy step; as such, it necessarily achieves the required model decrease.

For second-order steps, the requirement is as follows.
\begin{assumption}\label{assu:rtr-eigenstep}
	There exists $c_3 > 0$ such that, 		for all $k$ such that $\|\grad f(x_k)\| \leq \varepsilon_g$ and $\lambdamin(H_k) < -\varepsilon_H$,
	the step $\eta_k$ satisfies
	\begin{align}
		\hat m_k(0_{x_k}) - \hat m_k(\eta_k) & \geq c_3 \Delta_k^2 \varepsilon_H.
	\end{align}
\end{assumption}
This can be achieved
by making a step of maximal length along a direction which certifies that $\lambdamin(H_k) < -\varepsilon_H$~\citep{conn2000trust}: this is called an \emph{eigenstep}.
Like Cauchy steps, eigensteps can be computed in a finite number of operations, independently of $\varepsilon_g$ and $\varepsilon_H$.
\begin{lemma}\label{lem:EigenStep}
	Under~\aref{assu:rtr-Hk-second}, if $\lambdamin(H_k) < -\varepsilon_H$, there exists a tangent vector $u_k\in\T_{x_k}\calM$ with
	\begin{align*}
		\|u_k\| & = 1, & \inner{u_k}{\grad f(x_k)} & \leq 0, & & \textrm{ and } & \inner{u_k}{H_k[u_k]} < -\varepsilon_H.
	\end{align*}
	Setting $\eta_k$ to be any eigenstep $\eta_k^E = \Delta_k u_k$ for second-order steps fulfills~\aref{assu:rtr-eigenstep} with $c_3 = 1/2$.
	
	Let $v_1, \ldots, v_n$ be an orthonormal basis of $\T_{x_k}\calM$, where $n = \dim\calM$. One way of computing $\eta_k^E$ involves the application of $H_k$ to $v_1, \ldots, v_n$ plus $\mathcal{O}(n^3)$ arithmetic operations. The amount of work is independent of $\varepsilon_g$ and $\varepsilon_H$.
\end{lemma}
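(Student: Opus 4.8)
The plan is to produce the escape direction $u_k$ from the spectral decomposition of $H_k$, verify the required model decrease by a direct expansion, and then count the work involved.

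First I would invoke \aref{assu:rtr-Hk-second}, under which $H_k$ is linear and symmetric on the finite-dimensional inner product space $\T_{x_k}\calM$. By the spectral theorem, $H_k$ admits an orthonormal eigenbasis, and there is a unit eigenvector $v$ associated with the smallest eigenvalue, so that $\inner{v}{H_k[v]} = \lambdamin(H_k) < -\varepsilon_H$ by hypothesis. To enforce $\inner{u_k}{g_k} \leq 0$, I would set $u_k = v$ if $\inner{v}{g_k} \leq 0$ and $u_k = -v$ otherwise. The key observation is that flipping the sign preserves the unit norm and, because $H_k$ is symmetric (so the quadratic form $\eta \mapsto \inner{\eta}{H_k[\eta]}$ is even), also preserves $\inner{u_k}{H_k[u_k]} = \inner{v}{H_k[v]} < -\varepsilon_H$. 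This yields the three claimed properties of $u_k$.

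Next I would verify \aref{assu:rtr-eigenstep}. With $\eta_k = \eta_k^E = \Delta_k u_k$ and $g_k = \grad f(x_k)$, expanding the model~\eqref{eq:modelhatm} gives
\[
\hat m_k(0_{x_k}) - \hat m_k(\eta_k) = -\Delta_k \inner{u_k}{g_k} - \tfrac{1}{2}\Delta_k^2 \inner{u_k}{H_k[u_k]}.
\]
The first term is nonnegative because $\inner{u_k}{g_k} \leq 0$, and the second term strictly exceeds $\tfrac{1}{2}\Delta_k^2 \varepsilon_H$ because $\inner{u_k}{H_k[u_k]} < -\varepsilon_H$ and $\Delta_k > 0$. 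Hence the model decrease is at least $\tfrac{1}{2}\Delta_k^2 \varepsilon_H$, establishing \aref{assu:rtr-eigenstep} with $c_3 = 1/2$.

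Finally, for the complexity count, I would represent $H_k$ in the given orthonormal basis $v_1, \ldots, v_n$: applying $H_k$ once to each $v_j$ yields the entries $M_{ij} = \inner{v_i}{H_k[v_j]}$ of a symmetric $n \times n$ matrix whose eigenpairs coincide, in coordinates, with those of $H_k$. Extracting the smallest eigenvalue and a corresponding eigenvector is a standard symmetric eigenvalue computation; the remaining sign decision needs a single inner product with $g_k$, and scaling by $\Delta_k$ is trivial. The work beyond the $n$ applications of $H_k$ is therefore polynomial in $n = \dim\calM$ and, crucially, independent of $\varepsilon_g$ and $\varepsilon_H$. The one point requiring care---and the main obstacle to a fully rigorous count---is that the symmetric eigenvalue problem is not solved by a finite arithmetic procedure; the statement should be read as treating the eigensolver as a subroutine whose cost is polynomial in $n$ and independent of the optimality tolerances, which is exactly the feature emphasized before the lemma.
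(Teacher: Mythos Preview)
Your existence argument and the model-decrease verification are correct and match the paper's: pick a unit direction of sufficiently negative curvature, flip its sign if needed so that $\inner{u_k}{g_k}\le 0$, and expand $\hat m_k(0_{x_k})-\hat m_k(\eta_k^E)$ to get at least $\tfrac12\Delta_k^2\varepsilon_H$.

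The gap is exactly the one you flag in your last paragraph, and it is a real gap relative to what the lemma asserts. The final sentence of the lemma claims a procedure with a \emph{finite} number of arithmetic operations, polynomial in $n$ and independent of $\varepsilon_g,\varepsilon_H$. An eigensolver does not deliver this: symmetric eigenproblems are not solved exactly by finite rational arithmetic, and any iterative scheme will have an operation count that depends on the requested accuracy (hence implicitly on $\varepsilon_H$, since you need $\inner{u_k}{H_k[u_k]}<-\varepsilon_H$). So ``treat the eigensolver as a black box whose cost is polynomial in $n$'' is precisely what the lemma does \emph{not} allow you to do.

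The paper avoids this by not computing an eigenvector at all. After forming the matrix $H_{ij}=\inner{v_i}{H_k[v_j]}$, it computes a symmetric indefinite factorization $LDL^\top = H+\varepsilon_H I$ with $L$ unit lower triangular and $D$ block diagonal with $1\times1$ and $2\times2$ blocks; this is a finite $O(n^3)$ procedure. By Sylvester's law of inertia, $D$ is not positive semidefinite (since $\lambdamin(H)<-\varepsilon_H$), and a vector $x$ with $x^\top Dx<0$ can be read off from a single $1\times1$ or $2\times2$ block. Solving $L^\top y=x$ gives $y^\top(H+\varepsilon_H I)y<0$, i.e.\ $y^\top Hy<-\varepsilon_H\|y\|^2$, and $u_k=\pm\sum_i y_iv_i/\|y\|$ (sign chosen so $\inner{u_k}{g_k}\le0$) has all three required properties. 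This $u_k$ is generally not an eigenvector, but the lemma never needed one; it only needs a certified direction of curvature below $-\varepsilon_H$, and the factorization produces one in a truly finite count.
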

\begin{proof}
		Compute $H$, a symmetric matrix of size $n$ which represents $H_k$ in the basis $v_1, \ldots, v_n$, as $H_{ij} = \inner{v_i}{H_k[v_j]}$.
	Compute a factorization $LDL\transpose = H + \varepsilon_H \In$ where $I$ is the identity matrix, $L$ is invertible and triangular, and $D$ is block diagonal with blocks of size $1\times 1$ and $2\times 2$. The factorization can be computed in $\calO(n^3)$ operations~\citep[\S4.4]{golub2012matrix}---see the reference for a word of caution regarding pivoting for stability; pivoting is easily incorporated in the present argument. $D$ has the same inertia as $H+\varepsilon_H \In$, hence $D$ is not positive semidefinite (otherwise $H \succeq -\varepsilon_H \In$.) The structure of $D$ makes it easy to find $x \in \Rn$ with $x\transpose D x < 0$. Solve the triangular system $L\transpose y = x$ for $y\in\Rn$. Now, $0 > x\transpose D x = y\transpose LDL\transpose y = y\transpose (H + \varepsilon_H \In) y$. Consequently, $y\transpose H y < -\varepsilon_H \|y\|^2$. We can set $u_k = \pm \sum_{i=1}^n y_iv_i / \|y\|$, where the sign is chosen to ensure $\inner{u_k}{\grad f(x_k)} \leq 0$. To conclude, check that $\hat m_k(0_{x_k}) - \hat m_k(\eta_k^E) = -\inner{\eta_k^E}{\grad f(x_k)} - \frac{1}{2}\inner{\eta_k^E}{H_k[\eta_k^E]} \geq \frac{1}{2} \Delta_k^2 \varepsilon_H$.
\end{proof}
Notice from the proof that this strategy either certifies that $\lambdamin(H_k) \succeq -\varepsilon_H \Id$ (which must be checked at step~\ref{step:lambdamincheck} in Algorithm~\ref{algo:RTRscnd}) or certifies otherwise by providing an escape direction. We further note that, in practice, one usually prefers to use iterative methods to compute an approximate leftmost eigenvector of $H_k$ without representing it as a matrix.

\subsection{Main results and proofs for RTR}

Under the discussed assumptions, we now establish our main theorem about computation of approximate first- and second-order critical points for~\eqref{eq:P} using RTR in a bounded number of iterations. The following constants will be useful:
\begin{align}
	\lambda_g & = \frac{1}{4}\min\left( \frac{1}{c_0}, \frac{c_2}{\Lg + c_0} \right) & & \textrm{ and } & \lambda_H & = \frac{3}{4} \frac{c_3}{\LH + c_1}.
	\label{eq:rtr-lambda-gH}
\end{align}
\begin{theorem}\label{thm:rtr-scnd-main}
	Under \aref{assu:lowerbound}, \aref{assu:rtr-regularity-first}, \aref{assu:rtr-Hk-first}, \aref{assu:rtr-Cauchy-step} and assuming $\varepsilon_g \leq \frac{\Delta_0}{\lambda_g}$,\footnote{Theorem~\ref{thm:rtr-scnd-main} is scale invariant, in that if the cost function $f(x)$ is replaced by $\alpha f(x)$ for some positive $\alpha$ (which does not meaningfully change~\eqref{eq:P}), it is sensible to also multiply $\Lg, \LH, c_0, c_1, \varepsilon_g$ and $\varepsilon_H$ by $\alpha$; consequently, the upper bounds on $\varepsilon_g$ and $\varepsilon_H$ and the upper bounds on $N_1$ and $N_2$ are invariant under this scaling. If it is desirable to always allow $\varepsilon_g, \varepsilon_H$ in, say, $(0, 1]$, one possibility is to artificially make $\Lg, \LH, c_0, c_1$ larger (which is always allowed).} Algorithm~\ref{algo:RTRscnd} produces an iterate $x_{N_1}$ satisfying $\|\grad f(x_{N_1})\| \leq \varepsilon_g$ with
	\begin{align}
		N_1 \leq  \frac{3}{2} \frac{f(x_0) - f^*}{\rho' c_2 \lambda_g} \frac{1}{\varepsilon_g^2} + \frac{1}{2} \log_2\left( \frac{\Delta_0}{\lambda_g \varepsilon_g} \right)  = \mathcal{O} \left( \frac{1}{\varepsilon_g^2} \right).
	\end{align}
	Furthermore, if $\varepsilon_H < \infty$, then under additional assumptions \aref{assu:rtr-regularity-second}, \aref{assu:rtr-Hk-second}, \aref{assu:rtr-eigenstep} and assuming $\varepsilon_g \leq \frac{c_2}{c_3} \frac{\lambda_H}{\lambda_g^2}$ and $\varepsilon_H \leq \frac{c_2}{c_3} \frac{1}{\lambda_g}$, Algorithm~\ref{algo:RTRscnd} also produces an iterate $x_{N_2}$ satisfying $\|\grad f(x_{N_2})\| \leq \varepsilon_g$ and $\lambdamin(H_{N_2}) \geq -\varepsilon_H$ with
	\begin{align}
		N_1 \leq N_2 \leq \frac{3}{2} \frac{f(x_0) - f^*}{\rho' c_3 \lambda^2} \frac{1}{\varepsilon^2 \varepsilon_H} + \frac{1}{2}\log_2\left( \frac{\Delta_0}{\lambda \varepsilon} \right) = \mathcal{O} \left( \frac{1}{\varepsilon^2 \varepsilon_H} \right),
	\end{align}
	where we defined $(\lambda, \varepsilon) = (\lambda_g, \varepsilon_g)$ if $\lambda_g \varepsilon_g \leq \lambda_H \varepsilon_H$, and $(\lambda, \varepsilon) = (\lambda_H, \varepsilon_H)$ otherwise.
	Since the algorithm is a descent method, $f(x_{N_2}) \leq f(x_{N_1}) \leq f(x_0)$.
\end{theorem}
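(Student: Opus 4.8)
The plan is to transcribe the Euclidean trust-region complexity analysis of \citet{cartis2012complexity} into the manifold setting, the only conceptual change being that every Taylor-type estimate is imposed on the pullback $\hat f_k = f\circ\Retr_{x_k}$, an honest function on the vector space $\T_{x_k}\calM$. The starting observation is that $\hat m_k(0_{x_k}) = \hat f_k(0_{x_k}) = f(x_k)$, so that the acceptance ratio~\eqref{eq:rhok} obeys
\[
|\rho_k - 1| = \frac{\left|\hat f_k(\eta_k) - \hat m_k(\eta_k)\right|}{\hat m_k(0_{x_k}) - \hat m_k(\eta_k)}.
\]
Thus the whole argument reduces to (a) bounding the numerator above via the regularity assumptions, and (b) bounding the denominator below via the sufficient-model-decrease assumptions.

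The crux is a uniform lower bound on the trust-region radius, which I would isolate as Lemma~\ref{lemma:rtrscnd-delta-lowerbound}. For a first-order step, \aref{assu:rtr-regularity-first} together with the model-curvature bound $\|H_k\| \le c_0$ of \aref{assu:rtr-Hk-first} control the numerator by a multiple of $\|\eta_k\|^2 \le \Delta_k^2$, while \aref{assu:rtr-Cauchy-step} bounds the denominator below by $c_2\min(\Delta_k, \varepsilon_g/c_0)\varepsilon_g$. Hence there is a fixed multiple of $\lambda_g\varepsilon_g$ (with $\lambda_g$ as in~\eqref{eq:rtr-lambda-gH}) below which $|\rho_k-1|\le 3/4$, so that $\rho_k\ge 1/4$ and the radius is not shrunk; since the radius is only ever multiplied by $1/4$, it can never fall below $\lambda_g\varepsilon_g$, and the hypothesis $\varepsilon_g\le\Delta_0/\lambda_g$ seeds the induction. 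The same computation for second-order steps uses the cubic estimate of \aref{assu:rtr-regularity-second} with~\eqref{eq:assurtrHksecond} of \aref{assu:rtr-Hk-second} to bound the numerator by $\tfrac16(L_H+c_1)\Delta_k^3$, and \aref{assu:rtr-eigenstep} to bound the denominator below by $c_3\Delta_k^2\varepsilon_H$, yielding $\Delta_k \ge \lambda_H\varepsilon_H$. Consequently $\Delta_k \ge \min(\lambda_g\varepsilon_g, \lambda_H\varepsilon_H)$ throughout.

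The counting then proceeds by standard radius accounting. On every accepted step ($\rho_k>\rho'$) the true decrease is at least $\rho'$ times the model decrease: for a first-order step, inserting $\Delta_k\ge\lambda_g\varepsilon_g$ (and $\lambda_g\varepsilon_g\le\varepsilon_g/c_0$, which resolves the $\min$) gives $f(x_k)-f(x_{k+1})\ge\rho' c_2\lambda_g\varepsilon_g^2$, so telescoping against \aref{assu:lowerbound} caps the accepted steps by $(f(x_0)-f^*)/(\rho' c_2\lambda_g\varepsilon_g^2)$. Passing from accepted to \emph{all} iterations uses that the radius grows by a factor at most $2$ on accepted steps and shrinks by $1/4$ on rejected ones, so $\lambda_g\varepsilon_g\le\Delta_k\le\bar\Delta$ forces the rejected count to be at most half the accepted count plus $\tfrac12\log_2(\Delta_0/(\lambda_g\varepsilon_g))$; this gives the stated $N_1$. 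For the second-order count, each accepted second-order step decreases the cost by at least $\rho' c_3\Delta_k^2\varepsilon_H\ge\rho' c_3\lambda^2\varepsilon^2\varepsilon_H$ via $\Delta_k\ge\lambda\varepsilon$, and the one genuinely new point is to verify that under $\varepsilon_g\le\tfrac{c_2}{c_3}\tfrac{\lambda_H}{\lambda_g^2}$ and $\varepsilon_H\le\tfrac{c_2}{c_3}\tfrac1{\lambda_g}$ the first-order decrease $\rho' c_2\lambda_g\varepsilon_g^2$ dominates, so that $\rho' c_3\lambda^2\varepsilon^2\varepsilon_H$ is the binding guarantee for every accepted step. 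This is a two-line case split on whether $\lambda_g\varepsilon_g\le\lambda_H\varepsilon_H$, each case consuming exactly one hypothesis; telescoping and reusing the radius accounting with lower bound $\lambda\varepsilon$ give $N_2$. Finally $N_1\le N_2$ since $\|\grad f(x_{N_2})\|\le\varepsilon_g$ and $N_1$ is the first small-gradient index, while $f(x_{N_2})\le f(x_{N_1})\le f(x_0)$ by monotonicity of the descent.

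The main obstacle is the radius lower bound: pinning down $\lambda_g$ and $\lambda_H$ requires care because the numerator of $\rho_k-1$ is quadratic (resp.\ cubic) in $\Delta_k$ while the denominator is only linear (resp.\ quadratic), one must correctly dispatch the $\min(\Delta_k,\varepsilon_g/c_0)$ branch, and one must handle the fact that \aref{assu:rtr-Hk-first} bounds the model curvature only from above (reflecting possibly nonlinear $H_k$). Once this bound is secured, everything manifold-specific has already been absorbed into the pullback formulation of \aref{assu:rtr-regularity-first}--\aref{assu:rtr-regularity-second}, and the remaining accounting is identical to the Euclidean argument.
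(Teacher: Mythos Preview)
Your proposal is correct and follows essentially the same three-step structure as the paper: a uniform lower bound on $\Delta_k$ (the paper's Lemma~\ref{lemma:rtrscnd-delta-lowerbound}), a telescoping bound on the number of successful steps (Lemma~\ref{lem:rtrscnd-success-bound}), and radius accounting to bound unsuccessful steps in terms of successful ones (Lemma~\ref{lem:rtrscnd-success-fraction-N}). One small imprecision: in the $N_2$ regime the first-order per-step decrease is only $\rho' c_2 \min(\lambda_g\varepsilon_g,\lambda_H\varepsilon_H)\,\varepsilon_g$ (since $\Delta_k$ may drop to $\lambda_H\varepsilon_H$), not $\rho' c_2 \lambda_g\varepsilon_g^2$ as you wrote, but your case split on $\lambda_g\varepsilon_g \lessgtr \lambda_H\varepsilon_H$ still goes through with the stated hypotheses exactly as in the paper.
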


\begin{remark}
	Theorem~\ref{thm:rtr-scnd-main} makes a statement about $\lambdamin(H_k)$ at termination, \emph{not} about $\lambdamin(\Hess f(x_k))$. See Section~\ref{sec:HandHess} to connect these two quantities.
\end{remark}

To establish Theorem~\ref{thm:rtr-scnd-main}, we work through a few lemmas, following the proof technique in~\citep{cartis2012complexity}. We first show $\Delta_k$ is bounded below in proportion to the tolerances $\varepsilon_g$ and $\varepsilon_H$. This is used to show that the number of successful iterations in Algorithm~\ref{algo:RTRscnd} before termination (that is, iterations where $\rho_k > \rho'$~\eqref{eq:rhok}) is bounded above. It is then shown that the total number of iterations is at most a constant multiple of the number of successful iterations, which implies termination in bounded time.
We start by showing that the trust-region radius is bounded away from zero.
Essentially, this is because if $\Delta_k$ becomes too small, then the Cauchy step and eigenstep are certain to be successful owing to the quality of the model in such a small region, so that the trust-region radius could not decrease any further.
\begin{lemma} \label{lemma:rtrscnd-delta-lowerbound}
	Under the assumptions of Theorem~\ref{thm:rtr-scnd-main},
	if Algorithm~\ref{algo:RTRscnd} executes $N$ iterations without terminating, then
	\begin{align}
		\Delta_k \geq \min\left( \Delta_0, \lambda_g \varepsilon_g, \lambda_H \varepsilon_H \right)
	\end{align}
	for $k = 0, \ldots, N$, where $\lambda_g$ and $\lambda_H$ are defined in~\eqref{eq:rtr-lambda-gH}.
\end{lemma}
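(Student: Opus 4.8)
The plan is to exploit the structure of the radius update in Algorithm~\ref{algo:RTRscnd}: the trust-region radius shrinks \emph{only} on a failed iteration (one with $\rho_k < \frac14$), and then only by the fixed factor $\frac14$, giving $\Delta_{k+1} = \frac14\Delta_k$. So it suffices to show that an iteration can fail only when $\Delta_k$ is already comfortably large — larger than $4\lambda_g\varepsilon_g$ on a first-order step, larger than $4\lambda_H\varepsilon_H$ on a second-order step — so that even after the reduction the radius stays above the claimed bound. Granting this, the lemma follows by a one-line induction on $k$. Set $T = \min(\Delta_0, \lambda_g\varepsilon_g, \lambda_H\varepsilon_H)$. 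The base case $\Delta_0 \geq T$ is immediate. For the step, if $\Delta_k \geq T$ then either the iteration is successful enough that $\Delta_{k+1} \geq \Delta_k \geq T$, or it fails, in which case the threshold claim gives $\Delta_{k+1} = \frac14\Delta_k > \frac14 \cdot 4\min(\lambda_g\varepsilon_g,\lambda_H\varepsilon_H) = \min(\lambda_g\varepsilon_g, \lambda_H\varepsilon_H) \geq T$.

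The technical core is thus a lower bound on $\rho_k$ in terms of $\Delta_k$. Using the definition~\eqref{eq:rhok} and $\hat f_k(0_{x_k}) = f(x_k) = \hat m_k(0_{x_k})$, I would write
\[
	\rho_k - 1 = \frac{\hat m_k(\eta_k) - \hat f_k(\eta_k)}{\hat m_k(0_{x_k}) - \hat m_k(\eta_k)},
\]
and bound the numerator (model-versus-pullback gap) and the denominator (guaranteed model decrease) separately. For a first-order step, \aref{assu:rtr-regularity-first} via~\eqref{eq:lipschitzpullbackLg} and the boundedness $\|H_k\| \leq c_0$ from \aref{assu:rtr-Hk-first} give $|\hat m_k(\eta_k) - \hat f_k(\eta_k)| \leq \frac{L_g+c_0}{2}\|\eta_k\|^2 \leq \frac{L_g+c_0}{2}\Delta_k^2$, while \aref{assu:rtr-Cauchy-step} bounds the denominator below by $c_2\min(\Delta_k, \varepsilon_g/c_0)\varepsilon_g$. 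Restricting to $\Delta_k \leq 4\lambda_g\varepsilon_g = \min(\varepsilon_g/c_0, c_2\varepsilon_g/(L_g+c_0))$ forces the minimum to equal $\Delta_k$ and yields $|\rho_k - 1| \leq \frac{(L_g+c_0)\Delta_k}{2c_2\varepsilon_g} \leq \frac12$, hence $\rho_k \geq \frac12 > \frac14$: the step cannot fail. Contrapositively, a failed first-order step has $\Delta_k > 4\lambda_g\varepsilon_g$, as needed. For a second-order step the argument is parallel, using \aref{assu:rtr-regularity-second} and \aref{assu:rtr-Hk-second} to obtain $|\hat m_k(\eta_k) - \hat f_k(\eta_k)| \leq \frac{c_1\Delta_k}{6}\|\eta_k\|^2 + \frac{L_H}{6}\|\eta_k\|^3 \leq \frac{L_H+c_1}{6}\Delta_k^3$, and \aref{assu:rtr-eigenstep} to bound the denominator below by $c_3\Delta_k^2\varepsilon_H$; with $\lambda_H = \frac34\frac{c_3}{L_H+c_1}$ one checks that $\Delta_k \leq 4\lambda_H\varepsilon_H$ gives $|\rho_k - 1| \leq \frac12$, so again the step succeeds and a failed second-order step must have $\Delta_k > 4\lambda_H\varepsilon_H$.

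The main obstacle is not any single estimate but the bookkeeping that makes the two constants $\lambda_g$ and $\lambda_H$ in~\eqref{eq:rtr-lambda-gH} line up so that the ``cannot fail'' threshold is exactly $4$ times the target lower bound; this factor $4$ is what absorbs the $\frac14$ radius reduction and is the reason for the leading $\frac14$ (resp.\ $\frac34$) in the definitions of $\lambda_g$ and $\lambda_H$. Secondary care is needed to pass from $\|\eta_k\|$ to $\Delta_k$ in the numerator bounds (legitimate since the steps satisfy $\|\eta_k\| \leq \Delta_k$) and to dispose of the $\min$ in the Cauchy decrease by restricting to the regime $\Delta_k \leq \varepsilon_g/c_0$. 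Since first-order steps invoke only \aref{assu:rtr-regularity-first}, \aref{assu:rtr-Hk-first} and \aref{assu:rtr-Cauchy-step}, the bound $\Delta_k \geq \min(\Delta_0, \lambda_g\varepsilon_g)$ holds already in the first-order regime ($\varepsilon_H = \infty$), with the $\lambda_H\varepsilon_H$ term becoming vacuous.
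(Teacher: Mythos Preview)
Your proposal is correct and essentially mirrors the paper's own proof: both split into the first-order and second-order cases, bound $|\rho_k-1|$ via the same numerator/denominator estimates using \aref{assu:rtr-regularity-first}--\aref{assu:rtr-eigenstep}, and show that $\Delta_k \leq 4\lambda_g\varepsilon_g$ (resp.\ $4\lambda_H\varepsilon_H$) forces $\rho_k \geq \frac12$ so the radius cannot shrink. The only difference is cosmetic: the paper concludes by observing that the radius drops by at most a factor $\frac14$ per step, whereas you spell out the induction explicitly via the contrapositive --- arguably a cleaner presentation of the same idea.
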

\begin{proof}
	This follows essentially the proof of~\cite[Thm.\,7.4.2]{AMS08} which itself follows classical proofs~\citep{conn2000trust}. The core idea is to control $\rho_k$~\eqref{eq:rhok} close to~1, to show that there cannot be arbitrarily many trust-region radius reductions. The proof is in two parts.
	
	For the first part, assume $\|\grad f(x_k)\| > \varepsilon_g$. Then, consider the gap
	\begin{align}
		|\rho_k - 1| & = \left| \frac{\hat f_k(0_{x_k}) - \hat f_k(\eta_k)}{\hat m_k(0_{x_k}) - \hat m_k(\eta_k)} - 1 \right| 
		= \left| \frac{\hat m_k(\eta_k) - \hat f_k(\eta_k)}{\hat m_k(0_{x_k}) - \hat m_k(\eta_k)} \right|.
		\label{eq:rhokgap}
	\end{align}
	From \aref{assu:rtr-Cauchy-step}, we know the denominator is not too small:
	\begin{align*}
		\hat m_k(0_{x_k}) - \hat m_k(\eta_k) & \geq c_2 \min\left( \Delta_k, \frac{\varepsilon_g}{c_0} \right) \varepsilon_g.
	\end{align*}
	Now consider the numerator:
	\begin{align*}
		|\hat m_k(\eta_k) - \hat f_k(\eta_k)| & = \left| f(x_k) + \inner{\grad\,f(x_k)}{\eta_k} + \frac{1}{2}\inner{\eta_k}{H_k[\eta_k]} - \hat f_k(\eta_k) \right| \\
		& \leq \big| f(x_k) + \inner{\grad\,f(x_k)}{\eta_k} - \hat f_k(\eta_k) \big| + \frac{1}{2}\big| \inner{\eta_k}{H_k[\eta_k]} \big| \\
		& \leq \frac{1}{2}\left( \Lg + c_0 \right)\|\eta_k\|^2,
	\end{align*}
	where we used \aref{assu:rtr-regularity-first} for the first term, and \aref{assu:rtr-Hk-first} for the second term. Assume for the time being that $\Delta_k \leq \min\left( \frac{\varepsilon_g}{c_0}, \frac{c_2 \varepsilon_g}{\Lg + c_0} \right) = 4\lambda_g \varepsilon_g$. Then, using $\|\eta_k\| \leq \Delta_k$, it follows that
	\begin{align*}
		|\rho_k - 1| \leq \frac{1}{2} \frac{\Lg + c_0}{c_2  \min\left( \Delta_k, \frac{\varepsilon_g}{c_0} \right)\varepsilon_g} \Delta_k^2 \leq \frac{1}{2}\frac{\Lg+c_0}{c_2 \varepsilon_g} \Delta_k \leq \frac{1}{2}.
	\end{align*}
	Hence, $\rho_k \geq 1/2$, and by the mechanism of Algorithm~\ref{algo:RTRscnd}, it follows that $\Delta_{k+1} \geq \Delta_k$.

	For the second part, assume $\|\grad f(x_k)\| < \varepsilon_g$ and $\lambdamin(H_k) < -\varepsilon_H$. Then, by \aref{assu:rtr-eigenstep},
	\begin{align*}
		\hat m_k(0_{x_k}) - \hat m_k(\eta_k) & \geq c_3 \Delta_k^2 \varepsilon_H.
	\end{align*}
	Thus, by \aref{assu:rtr-regularity-second} and \aref{assu:rtr-Hk-second},
	\begin{align*}
		|\hat m_k(\eta_k) - \hat f_k(\eta_k)| & = \left| f(x_k) + \inner{\grad\,f(x_k)}{\eta_k} + \frac{1}{2}\inner{\eta_k}{H_k[\eta_k]} - \hat f_k(\eta_k) \right| \\
		& \leq \frac{\LH}{6} \|\eta_k\|^3 + \frac{1}{2} \left| \inner{\eta_k}{\big(\nabla^2 \hat f_k(0_{x_k}) - H_k\big)[\eta_k]} \right| \\
		\\		& \leq \frac{\LH + c_1}{6} \Delta_k^3.
	\end{align*}
	As previously, combine these observations into~\eqref{eq:rhokgap} to see that, if $\Delta_k \leq \frac{3c_3}{\LH + c_1}\varepsilon_H = 4\lambda_H \varepsilon_H$, then
	\begin{align}
		|\rho_k - 1| & \leq \frac{1}{2} \frac{\LH + c_1}{3c_3 \varepsilon_H} \Delta_k \leq \frac{1}{2}.
	\end{align}
	Again, this implies $\Delta_{k+1} \geq \Delta_k$.
	
	Now combine the two parts. We have established that, if $\Delta_k \leq 4\min\left(\lambda_g \varepsilon_g, \lambda_H\varepsilon_H\right)$, then $\Delta_{k+1} \geq \Delta_k$. To conclude the proof, consider the fact that Algorithm~\ref{algo:RTRscnd} cannot reduce the radius by more than $1/4$ in one step.
\end{proof}

By an argument similar to the one used for gradient methods, Lemma~\ref{lemma:rtrscnd-delta-lowerbound} implies an upper bound on the number of successful iterations required in Algorithm~\ref{algo:RTRscnd} to reach termination.
\begin{lemma}
	\label{lem:rtrscnd-success-bound}
	Under the assumptions of Theorem~\ref{thm:rtr-scnd-main},
	if Algorithm~\ref{algo:RTRscnd} executes $N$ iterations without terminating, define the set of \emph{successful steps} as
	$$
	S_N = \{ k \in \{0, \ldots, N\} : \rho_k > \rho' \} 	$$
	and let $U_N$ designate the unsuccessful steps, so that $S_N$ and $U_N$ form a partition of $\{0, \ldots, N\}$. Assume $\varepsilon_g \leq \Delta_0 / \lambda_g$. If $\varepsilon_H = \infty$, the number of successful steps obeys
	\begin{align}
		|S_N| \leq \frac{f(x_0) - f^*}{\rho' c_2 \lambda_g} \frac{1}{\varepsilon_g^2}.
	\end{align}
	Otherwise, if additionally $\varepsilon_g \leq \frac{c_2}{c_3} \frac{\lambda_H}{\lambda_g^2}$ and $\varepsilon_H \leq \frac{c_2}{c_3} \frac{1}{\lambda_g}$, we have the bound
	\begin{align}
		|S_N| \leq \frac{f(x_0) - f^*}{\rho' c_3 } \frac{1}{\min(\lambda_g\varepsilon_g, \lambda_H\varepsilon_H)^2 \varepsilon_H}.
	\end{align}
\end{lemma}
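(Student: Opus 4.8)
The plan is to reuse the telescoping argument from Theorem~\ref{thm:masterdescent}, now summing the guaranteed per-iteration decrease over the \emph{successful} steps only. First I would record that, by the definition~\eqref{eq:rhok} of $\rho_k$ together with $\hat f_k(0_{x_k}) = f(x_k)$ and $\hat f_k(\eta_k) = f(\Retr_{x_k}(\eta_k))$, every successful step ($\rho_k > \rho'$, so that $x_{k+1} = \Retr_{x_k}(\eta_k)$) satisfies
\begin{align*}
	f(x_k) - f(x_{k+1}) = \rho_k \big( \hat m_k(0_{x_k}) - \hat m_k(\eta_k) \big) > \rho' \big( \hat m_k(0_{x_k}) - \hat m_k(\eta_k) \big),
\end{align*}
while unsuccessful steps leave $f$ unchanged. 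Thus the crux is to lower-bound the model decrease $\hat m_k(0_{x_k}) - \hat m_k(\eta_k)$ by a single quantity valid for every successful step, regardless of whether it is a first- or second-order step.

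For this lower bound I would invoke Lemma~\ref{lemma:rtrscnd-delta-lowerbound}, which gives $\Delta_k \geq \min(\Delta_0, \lambda_g \varepsilon_g, \lambda_H \varepsilon_H)$; the hypothesis $\varepsilon_g \leq \Delta_0/\lambda_g$ (hence $\lambda_g \varepsilon_g \leq \Delta_0$) lets me drop the $\Delta_0$ term and write $\Delta_k \geq \delta := \min(\lambda_g \varepsilon_g, \lambda_H \varepsilon_H)$ (in the first-order-only branch $\varepsilon_H = \infty$, the term $\lambda_H \varepsilon_H$ is absent and $\delta = \lambda_g \varepsilon_g$). Now I split by step type. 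For first-order steps, \aref{assu:rtr-Cauchy-step} yields $\hat m_k(0_{x_k}) - \hat m_k(\eta_k) \geq c_2 \min(\Delta_k, \varepsilon_g/c_0)\varepsilon_g$; since $\lambda_g \leq 1/(4c_0)$ forces $\varepsilon_g/c_0 \geq 4\lambda_g \varepsilon_g \geq \delta$ while $\Delta_k \geq \delta$, this is at least $c_2 \delta \varepsilon_g$. For second-order steps, \aref{assu:rtr-eigenstep} gives directly $\hat m_k(0_{x_k}) - \hat m_k(\eta_k) \geq c_3 \Delta_k^2 \varepsilon_H \geq c_3 \delta^2 \varepsilon_H$.

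The one step that takes real work---the main obstacle---is reconciling the two bounds so that the first-order bound $c_2 \delta \varepsilon_g$ dominates the second-order target $c_3 \delta^2 \varepsilon_H$, i.e.\ proving $c_2 \varepsilon_g \geq c_3 \delta \varepsilon_H$. Here I would split on which term realizes the minimum in $\delta$. If $\delta = \lambda_g \varepsilon_g$, the inequality reduces to $\varepsilon_H \leq \frac{c_2}{c_3}\frac{1}{\lambda_g}$, exactly the second standing hypothesis. If $\delta = \lambda_H \varepsilon_H$, then $\varepsilon_H \leq \frac{\lambda_g}{\lambda_H}\varepsilon_g$, and substituting reduces the inequality to $\varepsilon_g \leq \frac{c_2}{c_3}\frac{\lambda_H}{\lambda_g^2}$, exactly the first standing hypothesis---this is precisely where those two threshold assumptions on $\varepsilon_g$ and $\varepsilon_H$ are consumed. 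With $c_2 \delta \varepsilon_g \geq c_3 \delta^2 \varepsilon_H$ in hand, every successful step of either type obeys $\hat m_k(0_{x_k}) - \hat m_k(\eta_k) \geq c_3 \delta^2 \varepsilon_H$. Finally I would telescope, using \aref{assu:lowerbound} for $f(x_{N+1}) \geq f^*$:
\begin{align*}
	f(x_0) - f^* \geq f(x_0) - f(x_{N+1}) = \sum_{k \in S_N} \big( f(x_k) - f(x_{k+1}) \big) \geq |S_N|\, \rho' c_3 \delta^2 \varepsilon_H,
\end{align*}
and rearrange to obtain the second claimed bound; in the $\varepsilon_H = \infty$ branch the identical telescoping with the cleaner estimate $c_2 \delta \varepsilon_g = c_2 \lambda_g \varepsilon_g^2$ delivers $|S_N| \leq \frac{f(x_0)-f^*}{\rho' c_2 \lambda_g}\frac{1}{\varepsilon_g^2}$.
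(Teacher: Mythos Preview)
Your proposal is correct and follows essentially the same approach as the paper: lower-bound the per-step model decrease via \aref{assu:rtr-Cauchy-step} and \aref{assu:rtr-eigenstep}, plug in the radius lower bound $\Delta_k \geq \delta = \min(\lambda_g\varepsilon_g,\lambda_H\varepsilon_H)$ from Lemma~\ref{lemma:rtrscnd-delta-lowerbound}, then use the two threshold hypotheses on $\varepsilon_g,\varepsilon_H$ to show the first-order bound $c_2\delta\varepsilon_g$ dominates $c_3\delta^2\varepsilon_H$, and telescope. The paper expands the minimum into four terms $\min(c_2\lambda_g\varepsilon_g^2,\,c_2\lambda_H\varepsilon_g\varepsilon_H,\,c_3\lambda_g^2\varepsilon_g^2\varepsilon_H,\,c_3\lambda_H^2\varepsilon_H^3)$ and shows the third is smallest; your case split on which term realizes $\delta$ is a slightly cleaner packaging of the same comparison.
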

\begin{proof}
	The proof parallels~\cite[Lemma\,4.5]{cartis2012complexity}.
	Clearly, if $k\in U_N$, then $f(x_k) = f(x_{k+1})$. On the other hand, if $k\in S_N$, then $\rho_k \geq \rho'$~\eqref{eq:rhok}. Combine this with \aref{assu:rtr-Cauchy-step} and \aref{assu:rtr-eigenstep} to see that, for $k \in S_N$,
	\begin{align*}
		f(x_k) - f(x_{k+1}) & \geq \rho' \big(\hat m_k(0_{x_k}) - \hat m_k(\eta_k)\big) \\
		& \geq \rho' \min\left( c_2 \min\left( \Delta_k, \frac{\varepsilon_g}{c_0} \right) \varepsilon_g \ , \ c_3 \Delta_k^2 \varepsilon_H \right).
	\end{align*}
	By Lemma~\ref{lemma:rtrscnd-delta-lowerbound} and the assumption $\lambda_g\varepsilon_g \leq \Delta_0$, it holds that $\Delta_k \geq \min\left( \lambda_g \varepsilon_g, \lambda_H \varepsilon_H \right)$. Furthermore, using $\lambda_g \leq 1/c_0$ shows that $\min(\Delta_k, \varepsilon_g / c_0) \geq \min(\Delta_k, \lambda_g \varepsilon_g) \geq \min\left( \lambda_g \varepsilon_g, \lambda_H \varepsilon_H \right)$. Hence,
	\begin{align}
		f(x_k) - f(x_{k+1}) & \geq \rho' \min\left( c_2 \lambda_g \varepsilon_g^2, c_2 \lambda_H \varepsilon_g \varepsilon_H, c_3 \lambda_g^2 \varepsilon_g^2 \varepsilon_H, c_3 \lambda_H^2 \varepsilon_H^3 \right).
		\label{eq:foo123}
	\end{align}
	If $\varepsilon_H = \infty$, this simplifies to
	$$
	f(x_k) - f(x_{k+1}) \geq \rho' c_2 \lambda_g \varepsilon_g^2.
	$$
	Sum over iterations up to $N$ and use~\aref{assu:lowerbound} (bounded $f$):
	\begin{align*}
		f(x_0) - f^* \geq f(x_0) - f(x_{N+1}) & = \sum_{k\in S_N} f(x_k) - f(x_{k+1})
		\geq |S_N| \rho' c_2 \lambda_g \varepsilon_g^2.
	\end{align*}
	Hence,
	\begin{align*}
		|S_N| \leq \frac{f(x_0) - f^*}{\rho' c_2 \lambda_g} \frac{1}{\varepsilon_g^2}.
	\end{align*}
	On the other hand, if $\varepsilon_H < \infty$, then, starting over from~\eqref{eq:foo123} and assuming both  $c_3 \lambda_g^2 \varepsilon_g^2 \varepsilon_H \leq c_2 \lambda_H \varepsilon_g \varepsilon_H$ and $c_3 \lambda_g^2 \varepsilon_g^2 \varepsilon_H \leq c_2\lambda_g\varepsilon_g^2$ (which is equivalent to $\varepsilon_g \leq c_2\lambda_H / c_3\lambda_g^2$ and $\varepsilon_H \leq c_2/c_3\lambda_g$), it comes with the same telescoping sum that
	\begin{align*}
		f(x_0) - f^* \geq |S_N| \rho' c_3 \min(\lambda_g\varepsilon_g, \lambda_H\varepsilon_H)^2 \varepsilon_H.
	\end{align*}
	Solve for $|S_N|$ to conclude.
\end{proof}

Finally, we show that the total number of steps $N$ before termination cannot be more than a fixed multiple of the number of successful steps $|S_N|$.

\begin{lemma} \label{lem:rtrscnd-success-fraction-N}
	Under the assumptions of Theorem~\ref{thm:rtr-scnd-main},
	if Algorithm~\ref{algo:RTRscnd}  executes $N$ iterations without terminating, using the notation $S_N$ and $U_N$ of Lemma~\ref{lem:rtrscnd-success-bound}, it holds that
	\begin{align}
		|S_N| & \geq \frac{2}{3}(N+1) - \frac{1}{3}\max\left( 0 , \log_2\left( \frac{\Delta_0}{\lambda_g \varepsilon_g} \right) , \log_2\left( \frac{\Delta_0}{\lambda_H \varepsilon_H} \right) \right).
	\end{align}
\end{lemma}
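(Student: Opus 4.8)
The plan is to track how the trust-region radius $\Delta_k$ evolves multiplicatively across the iterations $k \in \{0, \ldots, N\}$ and to couple this with the lower bound from Lemma~\ref{lemma:rtrscnd-delta-lowerbound}. The key observation is that the radius-update rule in Algorithm~\ref{algo:RTRscnd} forces the ratio $\Delta_{k+1}/\Delta_k$ into $\{\tfrac{1}{4}, 1, 2\}$ (the increase being capped at $\bar\Delta$, which only helps). Since $\rho' < 1/4$, every unsuccessful step ($k \in U_N$, i.e.\ $\rho_k \leq \rho'$) satisfies $\rho_k < 1/4$ and hence contracts the radius by exactly a factor $1/4$, while no step, successful or not, expands it by more than a factor $2$. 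Thus $\log_2(\Delta_{k+1}/\Delta_k) = -2$ for $k \in U_N$, and $\log_2(\Delta_{k+1}/\Delta_k) \leq 1$ for $k \in S_N$.

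Next I would telescope. Summing $\log_2(\Delta_{k+1}/\Delta_k)$ over $k = 0, \ldots, N$ collapses to $\log_2(\Delta_{N+1}/\Delta_0)$, so that
\begin{align*}
\log_2\frac{\Delta_{N+1}}{\Delta_0} = \sum_{k \in S_N} \log_2\frac{\Delta_{k+1}}{\Delta_k} + \sum_{k \in U_N} \log_2\frac{\Delta_{k+1}}{\Delta_k} \leq |S_N| - 2|U_N|.
\end{align*}
The lower bound of Lemma~\ref{lemma:rtrscnd-delta-lowerbound}, applied to the terminal radius, gives $\Delta_{N+1} \geq \Delta_{\min}$ with $\Delta_{\min} := \min(\Delta_0, \lambda_g \varepsilon_g, \lambda_H \varepsilon_H)$, which bounds the left-hand side below by $-\log_2(\Delta_0 / \Delta_{\min})$.

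To finish, I would substitute $|U_N| = (N+1) - |S_N|$ into $|S_N| - 2|U_N| \geq -\log_2(\Delta_0/\Delta_{\min})$, obtaining $3|S_N| - 2(N+1) \geq -\log_2(\Delta_0/\Delta_{\min})$, and solve for $|S_N|$. It then remains only to rewrite the logarithm of a minimum as a maximum of logarithms: since $\log_2\!\big(\Delta_0/\min(\Delta_0,\lambda_g\varepsilon_g,\lambda_H\varepsilon_H)\big) = \max\!\big(0, \log_2(\Delta_0/(\lambda_g\varepsilon_g)), \log_2(\Delta_0/(\lambda_H\varepsilon_H))\big)$, which is exactly the stated right-hand side.

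The argument is essentially a counting/telescoping exercise, so I do not anticipate a genuine obstacle. The only points demanding care are (i) confirming, via $\rho' < 1/4$, that every rejected step contracts the radius by the \emph{full} factor $1/4$ rather than merely leaving it unchanged (otherwise the coefficient $\tfrac{2}{3}$ would degrade), and (ii) the index bookkeeping, namely ensuring that the radius lower bound covers the terminal $\Delta_{N+1}$ produced after iteration $N$; this follows from the very mechanism used to prove Lemma~\ref{lemma:rtrscnd-delta-lowerbound}, since the radius can never drop below $\Delta_{\min}$ while the algorithm has not returned.
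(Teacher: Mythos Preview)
Your proposal is correct and follows essentially the same approach as the paper: bound each ratio $\Delta_{k+1}/\Delta_k$ by $2$ on $S_N$ and by $1/4$ on $U_N$, multiply (equivalently, telescope in $\log_2$), invoke the lower bound on the radius from Lemma~\ref{lemma:rtrscnd-delta-lowerbound}, and solve using $|S_N|+|U_N|=N+1$. Your treatment of the two delicate points---that $\rho'<1/4$ forces the full $1/4$ contraction on every rejected step, and that the radius lower bound extends to $\Delta_{N+1}$ by the same mechanism underlying Lemma~\ref{lemma:rtrscnd-delta-lowerbound}---is in fact slightly more careful than the paper, which telescopes to $\Delta_N$ rather than $\Delta_{N+1}$.
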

\begin{proof}
	The proof rests on the lower bound for $\Delta_k$ obtained in Lemma~\ref{lemma:rtrscnd-delta-lowerbound}. It parallels~\cite[Lemma\,4.6]{cartis2012complexity}. For all $k\in S_N$, it holds that $\Delta_{k+1} \leq 2 \Delta_k$. For all $k\in U_k$, it holds that $\Delta_{k+1} \leq \frac{1}{4} \Delta_k$. Hence,
	\begin{align*}
		\Delta_N \leq 2^{|S_N|} \left(\frac{1}{4}\right)^{|U_N|} \Delta_0.
	\end{align*}
	On the other hand, Lemma~\ref{lemma:rtrscnd-delta-lowerbound} gives
	\begin{align*}
		\Delta_N \geq \min\left( \Delta_0, \lambda_g \varepsilon_g, \lambda_H \varepsilon_H \right).
	\end{align*}
	Combine, divide by $\Delta_0$ and take the log in base 2:
	\begin{align*}
		|S_N| - 2|U_N| \geq \min\left( 0, \log_2\left( \frac{\lambda_g \varepsilon_g}{\Delta_0}\right), \log_2\left( \frac{\lambda_H \varepsilon_H}{\Delta_0} \right) \right).
	\end{align*}
	Use $|S_N| + |U_N| = N+1$ to conclude.
\end{proof}

We can now prove the main theorem.
\begin{proof}[Proof of Theorem~\ref{thm:rtr-scnd-main}]
	It is sufficient to combine Lemmas~\ref{lem:rtrscnd-success-bound} and~\ref{lem:rtrscnd-success-fraction-N} in both regimes. First, we get that if $\|\grad f(x_{k})\| > \varepsilon_g$ for $k = 0, \ldots, N$, then
	\begin{align*}
		N+1 \leq \frac{3}{2} \frac{f(x_0) - f^*}{\rho' c_2 \lambda_g} \frac{1}{\varepsilon_g^2} + \frac{1}{2} \log_2\left( \frac{\Delta_0}{\lambda_g \varepsilon_g} \right).
	\end{align*}
	(The term $\log_2\left( \frac{\Delta_0}{\lambda_H \varepsilon_H} \right)$ from Lemma~\ref{lem:rtrscnd-success-fraction-N} is irrelevant up to that point, as $\varepsilon_H$ could just as well have been infinite.) Thus, after a number of iterations larger than the right hand side, an iterate with sufficiently small gradient must have been produced, to avoid a contradiction.
	
	Second, we get that if for $k = 0, \ldots, N$ no iterate satisfies both $\|\grad f(x_k)\| \leq \varepsilon_g$ and $\lambdamin(H_k) \geq -\varepsilon_H$, then
	\begin{align*}
		N+1 \leq \frac{3}{2} \frac{f(x_0) - f^*}{\rho' c_3 } \frac{1}{\min(\lambda_g\varepsilon_g, \lambda_H\varepsilon_H)^2 \varepsilon_H} + \frac{1}{2}\max\left( \log_2\left( \frac{\Delta_0}{\lambda_g \varepsilon_g} \right) , \log_2\left( \frac{\Delta_0}{\lambda_H \varepsilon_H} \right) \right).
	\end{align*}
	Conclude with the same argument.
\end{proof}

\subsection{Connecting $H_k$ and $\Hess f(x_k)$}\label{sec:HandHess}

Theorem~\ref{thm:rtr-scnd-main} states termination of Algorithm~\ref{algo:RTRscnd} in terms of $\|\grad f(x_k)\|$ and $\lambdamin(H_k)$. Ideally, the latter must be turned into a statement about $\lambdamin(\Hess f(x_k))$, to match the second-order necessary optimality conditions of~\eqref{eq:P} more closely (recall Proposition~\ref{prop:necessaryconditions}). \aref{assu:rtr-Hk-second} itself only requires $H_k$ to be (weakly) related to $\nabla^2 \hat f_k(0_{x_k})$ (the Hessian of the pullback of $f$ at $x_k$), which is different from the Riemannian Hessian of $f$ at $x_k$ in general.
It is up to the user to provide $H_k$ sufficiently related to $\nabla^2 \hat f_k(0_{x_k})$. Additional control over the retraction at $x_k$ can further relate $\nabla^2 \hat f_k(0_{x_k})$ to $\Hess f(x_k)$, as we do now. Proofs for this section are in Appendix~\ref{apdx:Hfirstretr}.
\begin{lemma} \label{lem:scndretr}
		Define the maximal acceleration of $\Retr$ at $x$ as the real $a$ such that
	\begin{align*}
	\forall \eta \in \T_x\calM \textrm{ with } \|\eta\| = 1, \quad \Big. \left\| \frac{\mathrm{D}^2}{\mathrm{d}t^2} \Retr_x(t\eta) \Big|_{t=0} \right\| \leq a, 	\end{align*}
	where $\frac{\mathrm{D}^2}{\mathrm{d}t^2} \gamma $ denotes acceleration of the curve $t \mapsto \gamma(t)$ on $\calM$~\cite[\S5]{AMS08}.
	Then, 
	\begin{align*}
	\left\| \Hess f(x) - \nabla^2 \hat f_x(0_x) \right\|
		\leq a \cdot \|\grad f(x)\|.
	\end{align*}
	In particular, if $x$ is a critical point or if $a = 0$, the Hessians agree: $\Hess f(x) = \nabla^2 \hat f_x(0_x)$.
\end{lemma}
The particular cases appear as~\citep[Prop.\,5.5.5, 5.5.6]{AMS08}. This result highlights the crucial role of retractions with zero acceleration, known as \emph{second-order retractions} and defined in~\citep[Prop.\,5.5.5]{AMS08}; we are not aware of earlier references to this notion.
\begin{definition}\label{def:retraction2}
	A retraction is a \emph{second-order retraction} if it has zero acceleration, as defined in Lemma~\ref{lem:scndretr}. Then, retracted curves locally agree with geodesics up to second order.
\end{definition}
\begin{proposition}\label{prop:relateHandHess}
	Let $x_k\in\calM$ be the iterate returned by Algorithm~\ref{algo:RTRscnd} under the assumptions of Theorem~\ref{thm:rtr-scnd-main}. It satisfies $\|\grad f(x_k)\| \leq \varepsilon_g$ and $H_k \succeq -\varepsilon_H \Id$. Assume $H_k$ is related to the Hessian of the pullback as $\left\|\nabla^2 \hat f_k(0_{x_k}) - H_k\right\| \leq \delta_k$. Further assume the retraction has acceleration at $x_k$ bounded by $a_k$, as defined in Lemma~\ref{lem:scndretr}. Then,
	\begin{align*}
		\Hess f(x_k) \succeq -\left( \varepsilon_H + a_k \varepsilon_g + \delta_k \right) \Id.
	\end{align*}
	In particular, if the retraction is second-order and $H_k = \nabla^2 \hat f_k(0_{x_k})$, then $\Hess f(x_k) \succeq -\varepsilon_H\Id$.
\end{proposition}
We note that second-order retractions are frequently available in applications. Indeed, retractions for submanifolds obtained as (certain types of) projections---arguably one of the most natural classes of retractions for submanifolds---are second order~\citep[Thm.\,22]{absil2012retractions}. For example, the sphere retraction $\Retr_x(\eta) = (x+\eta)/\|x+\eta\|$ is second order. Such retractions for low-rank matrices are also known~\citep{absil2015lowrankretractions}.

\section{Example: smooth semidefinite programs}\label{sec:example}

This example is based on~\citep{boumal2016bmapproach}. Consider the  following semidefinite program, which occurs in robust PCA~\citep{mccoy2011robustpca} and as a convex relaxation of combinatorial problems such as Max-Cut, $\mathbb{Z}_2$-synchronization and community detection in the stochastic block model~\citep{goemans1995maxcut,bandeira2016lowrankmaxcut}:
\begin{align}
	\min_{X\in\Rnn} \trace(CX) \textrm{ subject to } \diag(X) = \mathbf{1}, X \succeq 0.
	\label{eq:SDP}
\end{align}
The symmetric cost matrix $C$ depends on the application. Interior point methods solve this problem in polynomial time, though they involve significant work to enforce the conic constraint $X\succeq 0$ ($X$ symmetric, positive semidefinite).
This motivates the approach of~\citet{burer2005local} to parameterize the search space as $X = YY\transpose$, where $Y$ is in $\Rnp$ for some well-chosen $p$:
\begin{align}
	\min_{Y\in\Rnp} \trace(CYY\transpose) \textrm{ subject to } \diag(YY\transpose) = \mathbf{1}.
	\label{eq:BM}
\end{align}
This problem is of the form of~\eqref{eq:P}, where $f(Y) = \trace(CYY\transpose)$ and the manifold is a product of $n$ unit spheres in $\Rp$:
\begin{align}
	\calM & = \{ Y\in\Rnp : \diag(YY\transpose) = \mathbf{1} \} = \{ Y\in\Rnp : \textrm{each row of $Y$ has unit norm} \}.
	\label{eq:calMmaxcut}
\end{align}
In principle, since the parameterization $X=YY\transpose$ breaks convexity, the new problem could have many spurious local optimizers and saddle points. Yet, for $p = n+1$, it has recently been shown that approximate second-order critical points $Y$ map to approximate global optimizers $X=YY\transpose$, as stated in the following proposition. (In this particular case, there is no need to control $\|\grad f(Y)\|$ explicitly.)
\begin{proposition}[\citet{boumal2016bmapproach}]\label{prop:maxcutapprox}
	If $X^\star$ is optimal for~\eqref{eq:SDP} and $Y$ is feasible for~\eqref{eq:BM} with $p>n$ and $\Hess f(Y) \succeq -\varepsilon_H\Id$, the optimality gap is bounded as
	\begin{align*}
		0 \leq \trace(CYY\transpose) - \trace(CX^\star) \leq \frac{n}{2}\varepsilon_H.
	\end{align*}
\end{proposition}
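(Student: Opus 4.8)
The plan is to exploit the structure of the oblique manifold $\calM$ (a product of $n$ unit spheres in $\Rp$) together with the extra room afforded by $p > n$. The lower bound $0 \leq \trace(CYY\transpose) - \trace(CX^\star)$ is immediate: $X = YY\transpose$ is symmetric, positive semidefinite and has unit diagonal, hence is feasible for~\eqref{eq:SDP}, so its cost cannot beat that of the optimum $X^\star$. The work lies entirely in the upper bound.

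First I would compute the Riemannian gradient and Hessian of $f(Y) = \trace(CYY\transpose)$ on $\calM$. Writing $\proj_Y$ for the orthogonal projection onto $\T_Y\calM$ (which removes, row by row, the component of each row along the corresponding row of $Y$), the Euclidean gradient $2CY$ projects to $\grad f(Y) = 2(C - S)Y$, where $S = \ddiag(CYY\transpose)$ is the diagonal matrix carrying the diagonal of $CYY\transpose$. Differentiating this vector field and projecting, the term coming from the derivative of $S$ drops out (a diagonal matrix times $Y$ is radial, hence annihilated by $\proj_Y$), and the remaining correction term pairs to zero against any tangent vector. The upshot is the clean identity
\begin{align*}
	\inner{\dot Y}{\Hess f(Y)[\dot Y]} = 2\inner{\dot Y}{(C-S)\dot Y}, \qquad \forall \dot Y \in \T_Y\calM,
\end{align*}
valid at every $Y \in \calM$, with no criticality required.

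Next I would use $p > n$ to promote the Hessian inequality to a spectral statement about $C - S$ as a full $n\times n$ matrix. Let $z \in \Rn$ be a unit eigenvector for $\lambdamin(C-S)$. Since the $n$ rows of $Y$ span a subspace of $\Rp$ of dimension at most $n < p$, there exists a unit vector $w \in \Rp$ orthogonal to all of them. The matrix $\dot Y = z w\transpose$ (row $i$ equal to $z_i w\transpose$) is then tangent at $Y$, satisfies $\|\dot Y\| = 1$, and gives $\inner{\dot Y}{(C-S)\dot Y} = z\transpose (C-S) z = \lambdamin(C-S)$. Feeding this into $\Hess f(Y) \succeq -\varepsilon_H\Id$ yields $2\lambdamin(C-S) \geq -\varepsilon_H$, that is,
\begin{align*}
	C - S \succeq -\tfrac{\varepsilon_H}{2}\Id.
\end{align*}

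Finally I would convert this spectral bound into the optimality gap by a direct trace computation, sidestepping any appeal to strong duality. Because $S$ is diagonal and $\diag(X^\star) = \mathbf{1}$, one has $\trace(S X^\star) = \trace(S) = \trace(CYY\transpose) = f(Y)$, so that $\trace((C-S)X^\star) = \trace(CX^\star) - f(Y)$. Since $X^\star \succeq 0$ and $C - S \succeq -\tfrac{\varepsilon_H}{2}\Id$, the trace of the product obeys $\trace((C-S)X^\star) \geq -\tfrac{\varepsilon_H}{2}\trace(X^\star) = -\tfrac{n}{2}\varepsilon_H$, using $\trace(X^\star) = n$. Rearranging gives $f(Y) - \trace(CX^\star) \leq \tfrac{n}{2}\varepsilon_H$, as claimed. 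I expect the main obstacle to be the Hessian computation---correctly handling the projection and verifying that both the derivative-of-$S$ term and the correction term vanish---together with the recognition that $p > n$ is exactly what produces the admissible test direction $w$; the concluding trace argument is then routine.
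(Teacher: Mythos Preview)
Your argument is correct in every step: the Hessian identity $\inner{\dot Y}{\Hess f(Y)[\dot Y]} = 2\inner{\dot Y}{(C-S)\dot Y}$ for $S = \ddiag(CYY\transpose)$ is right (both the $\dot S Y$ term and the projection correction $\ddiag(C\dot Y Y\transpose)Y$ vanish upon pairing with a tangent $\dot Y$), the rank-one test direction $\dot Y = zw\transpose$ with $w$ orthogonal to all rows of $Y$ is exactly how one uses $p>n$, and the concluding trace argument via $\trace(SX^\star) = \trace(S) = f(Y)$ is clean.

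Note, however, that the present paper does not actually prove this proposition: it is quoted from \citet{boumal2016bmsmooth} without proof. Your argument is essentially the one given in that reference, so there is nothing to contrast here---you have independently reproduced the intended proof.
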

Since $f$ is smooth in $\Rnp$ and $\calM$ is a compact submanifold of $\Rnp$, the regularity assumptions~\aref{assu:rtr-regularity-first} and~\aref{assu:rtr-regularity-second} hold with any second-order retraction (Lemmas~\ref{lem:compactsub1} and~\ref{lem:compactsub2}). In particular, they hold if $\Retr_Y(\dot Y)$ is the result of normalizing each row of $Y+\dot Y$ (Section~\ref{sec:HandHess}), or if the exponential map is used (which is cheap for this manifold, see Appendix~\ref{apdx:examplecomplexity}). Theorem~\ref{thm:rtr-scnd-main} then implies that RTR applied to the nonconvex problem~\eqref{eq:BM} computes a point $X = YY\transpose$ feasible for~\eqref{eq:SDP} such that $\Trace(CX)-\Trace(CX^\star) \leq \delta$ in $\calO(1/\delta^3)$ iterations. Appendix~\ref{apdx:examplecomplexity} bounds the total work with an explicit dependence on the problem dimension $n$ as $\calO(n^{10} / \delta^3)$ arithmetic operations, where $\calO$ hides factors depending on the data $C$ and an additive log-term. This result follows from a bound $\LH \leq 8\opnorm{C} \sqrt{n}$ for~\aref{assu:rtr-regularity-second} which is responsible for a factor of $n$ in the complexity---the remaining factors could be improved, see below.

In~\citep{boumal2016bmapproach}, it is shown that, generically in $C$, if $p \geq \lceil\sqrt{2n}\rceil$, then all second-order critical points of~\eqref{eq:BM} are globally optimal (despite nonconvexity). This means RTR globally converges to global optimizers with cheaper iterations (due to reduced dimensionality). Unfortunately, there is no statement of quality pertaining to \emph{approximate} second-order critical points for such small $p$, so that this analysis is not sufficient to obtain an improved worst-case complexity bound.

These bounds are worse than guarantees provided by interior point methods. Indeed, following~\citep[\S4.3.3, with eq.\,(4.3.12)]{nesterov2004introductory}, interior point methods achieve a solution in $\calO(n^{3.5}\log(n/\delta))$ arithmetic operations.
Yet, numerical experiments in~\citep{boumal2016bmapproach} suggest RTR often outperforms interior point methods, indicating the bound $\calO(n^{10}/\delta^3)$ is wildly pessimistic. We report it here mainly because, to the best of our knowledge, this is the first explicit bound for a Burer--Monteiro approach to solving a semidefinite program.

A number of factors drive the gap between our worst-case bound and practice. In particular, strategies far more efficient than the $LDL\transpose$ factorization in Lemma~\ref{lem:EigenStep} are used to compute second-order steps, and they can exploit structure in $C$. High accuracy solutions are reached owing to RTR typically converging superlinearly, locally. And $p$ is chosen much smaller than $n+1$.

See also~\citep{mei2017solvingSDPs} for formal complexity results in a setting where $p$ is allowed to be independent of $n$; this precludes reaching an objective value arbitrarily close to optimal, in exchange for cheaper computations.

\section{Conclusions and perspectives}\label{sec:conclusions}

We presented bounds on the number of iterations required by the Riemannian gradient descent algorithm and the Riemannian trust-region algorithm to reach points which approximately satisfy first- and second-order necessary optimality conditions, under some regularity assumptions but regardless of initialization. When the search space $\calM$ is a Euclidean space, these bounds were already known. For the more general case of $\calM$ being a Riemannian manifold, these bounds are new.

As a subclass of interest, we showed the regularity requirements are satisfied if $\calM$ is a compact submanifold of $\Rn$ and $f$ has locally Lipschitz continuous derivatives of appropriate order. This covers a rich class of practical optimization problems. 
While there are no explicit assumptions made about $\calM$, the smoothness requirements for the pullback of the cost---\aref{assu:lipschitzpullbackrestricted}, \aref{assu:rtr-regularity-first} and \aref{assu:rtr-regularity-second}---implicitly restrict the class of manifolds to which these results apply. Indeed, for certain manifolds, even for nice cost functions $f$, there may not exist retractions which ensure the assumptions hold. This is the case in particular for certain incomplete manifolds, such as open Riemannian submanifolds of $\Rn$ and certain geometries of the set of fixed-rank matrices---see also Remark~\ref{rem:domainrestriction} about injectivity radius. For such sets, it may be necessary to adapt the assumptions. For fixed-rank matrices for example, \citet[\S4.1]{vandereycken2013lowrank} obtains convergence results assuming a kind of coercivity on the cost function: for any sequence of rank-$k$ matrices $(X_i)_{i=1,2,\ldots}$ such that the first singular value $\sigma_1(X_i) \to \infty$ or the $k$th singular value $\sigma_{k}(X_i) \to 0$, it holds that $f(X_i) \to \infty$. This ensures iterates stay away from the open boundary.

The iteration bounds are sharp, but additional information may yield
more favorable bounds in specific contexts. In particular, when the
studied algorithms converge to a nondegenerate local optimizer, they do
so with an at least linear rate, so that the number of iterations is merely
$\calO(\log(1/\varepsilon))$ once in the linear regime.
This suggests a stitching approach: for a given application, it may be possible to show that rough approximate second-order critical points are in a local attraction basin; the iteration cost can then be bounded by the total work needed to attain such a crude point starting from anywhere, plus the total work needed to refine the crude point to high accuracy with a linear or even quadratic convergence rate.
This is, to some degree, the successful strategy in~\citep{sun2015complete,sun2016geometric}.

Finally, we note that it would also be interesting to study the global convergence rates of Riemannian versions of adaptive regularization algorithms using cubics (ARC), as in the Euclidean case these can achieve approximate first-order criticality in $\calO(1/\varepsilon^{1.5})$ instead of $\calO(1/\varepsilon^2)$~\citep{cartis2011adaptivecubic}. Work in that direction could start with the convergence analyses proposed in~\citep{qi2011thesis}.

\section*{Acknowledgments}
\begin{footnotesize}
NB was supported by the ``Fonds Sp\'eciaux de Recherche'' (FSR) at UCLouvain and by the Chaire Havas ``Chaire Eco\-no\-mie et gestion des nouvelles don\-n\'ees'', the ERC Starting Grant SIPA and a Research in Paris grant at Inria \& ENS, and NSF DMS-1719558.
This paper presents research results of the Belgian Network DYSCO (Dynamical Systems, Control, and Optimization), funded by the Interuniversity Attraction Poles Programme initiated by the Belgian Science Policy Office. This work was supported by the ARC
``Mining and Optimization of Big Data Models''. CC acknowledges support from NERC through grant NE/L012146/1.
We thank Alex d'Aspremont, Simon Lacoste-Julien, Ju Sun, Bart Vandereycken and Paul Van Dooren for helpful discussions.
\end{footnotesize}

\bibliographystyle{abbrvnat}
\bibliography{../../boumal}

\appendix

\section{Essentials about manifolds}\label{apdx:manifolds}

We give here a simplified refresher of differential geometric concepts used in the paper, restricted to Riemannian submanifolds. All concepts are illustrated with the sphere.  See~\citep{AMS08} for a more complete discussion, including quotient manifolds.

We endow $\Rn$ with the classical Euclidean metric: for all $x, y \in \Rn$, $\inner{x}{y} = x\transpose y$. Consider the smooth map $h \colon \Rn \mapsto \Rm$ with $m \leq n$ and the constraint set
\begin{align*}
\calM & = \{ x \in \Rn : h(x) = 0 \}.
\end{align*}
Locally around each $x$, this set can be linearized by differentiating the constraint. The subspace corresponding to this linearization is the kernel of the differential of $h$ at $x$~\citep[eq.~(3.19)]{AMS08}:
\begin{align*}
\T_x\calM & = \{ \eta \in \Rn : \D h(x)[\eta] = 0 \}.
\end{align*}
If this subspace has dimension $n-m$ for all $x\in\calM$, then $\calM$ is a submanifold of dimension $n-m$ of $\Rn$~\citep[Prop.~3.3.3]{AMS08} and $\T_x\calM$ is called the tangent space to $\calM$ at $x$.
For example, the unit sphere in $\Rn$ is a submanifold of dimension $n-1$ defined by
\begin{align*}
\mathcal{S}^{n-1} & = \{ x \in \Rn : x\transpose x = 1 \},
\end{align*}
and the tangent space at $x$ is
\begin{align*}
\T_x\mathcal{S}^{n-1} & = \{ \eta \in \Rn : x\transpose \eta = 0 \}.
\end{align*}
By endowing each tangent space with the (restricted) Euclidean metric, we turn $\calM$ into a Riemannian submanifold of the Euclidean space $\Rn$. (In general, the metric could be different, and would depend on $x$; to disambiguate, one would write $\inner{\cdot}{\cdot}_x$.) An obvious retraction for the sphere (see Definition~\ref{def:retraction}) is to normalize:
\begin{align*}
\Retr_x(\eta) & = \frac{x+\eta}{\|x+\eta\|}.
\end{align*}
Being an orthogonal projection to the manifold, this is actually a second-order retraction, see Definition~\ref{def:retraction2} and~\citep[Thm.\,22]{absil2012retractions}.

The Riemannian metric leads to the notion of Riemannian gradient of a real function~$f$ defined in an open set of $\Rn$ containing $\calM$.\footnote{$f$ needs not be defined outside of $\calM$, but this is often the case in applications and simplifies exposition.} The Riemannian gradient of $f$ at $x$ is the (unique) tangent vector $\grad f(x)$ at $x$ satisfying
\begin{align*}
\forall \eta \in \T_x\calM, \quad \D f(x)[\eta] = \lim_{t \to 0} \frac{f(x+t\eta)-f(x)}{t} = \inner{\eta}{\grad f(x)}.
\end{align*}
In this setting, the Riemannian gradient is nothing but the orthogonal projection of the Euclidean (classical) gradient $\nabla f(x)$ to the tangent space. Writing $\Proj_x \colon \Rn \to \T_x\calM$ for the orthogonal projector, we have~\cite[eq.~(3.37)]{AMS08}:
\begin{align*}
\grad f(x) & = \Proj_x\!\left( \nabla f(x) \right).
\end{align*}
Continuing the sphere example, the orthogonal projector is $\Proj_x(y) = y - (x\transpose y) x$, and if $f(x) = \frac{1}{2} x\transpose A x$ for some symmetric matrix $A$, then
\begin{align*}
\nabla f(x) & = Ax, & \textrm{ and} & & \grad f(x) & = Ax - (x\transpose A x) x.
\end{align*}
Notice that the critical points of $f$ on $\mathcal{S}^{n-1}$ coincide with the unit eigenvectors of $A$.

We can further define a notion of Riemannian Hessian as the projected differential of the Riemannian gradient:\footnote{Proper definition of Riemannian Hessians requires the notion of Riemannian connections, which we omit here; see~\cite[\S5]{AMS08}}
\begin{align*}
\Hess f(x)[\eta] & = \Proj_x \Big( \D\big( x \mapsto \Proj_x \nabla f(x) \big)(x)[\eta]\Big).
\end{align*}
$\Hess f(x)$ is a linear map from $\T_x\calM$ to itself, symmetric with respect to the Riemannian metric. Given a second-order retraction (Definition~\ref{def:retraction2}), it is equivalently defined by:
\begin{align*}
\forall \eta \in \T_x\calM, \quad \inner{\eta}{\Hess f(x)[\eta]} & = \left. \frac{\mathrm{d}^2}{\mathrm{d}t^2} f(\Retr_x(t\eta))\right|_{t=0},
\end{align*}
see~\citep[eq.\,(5.35)]{AMS08}. Continuing our sphere example,
\begin{align*}
\D\big( x \mapsto \Proj_x \nabla f(x) \big)(x)[\eta] & = \D\big( x \mapsto Ax - (x\transpose Ax)x \big)(x)[\eta] = A\eta - (x\transpose Ax)\eta - 2 (x\transpose A \eta) x.
\end{align*}
Projection of the latter gives the Hessian:
\begin{align*}
\Hess f(x)[\eta] & = \Proj_x (A\eta) - (x\transpose Ax)\eta.
\end{align*}
Consider the implications of a positive semidefinite Hessian (on the tangent space):
\begin{align*}
\Hess f(x) \succeq 0 & \iff \inner{\eta}{\Hess f(x)[\eta]} \geq 0 & & \forall \eta\in\T_x\mathcal{S}^{n-1} \\
& \iff \eta\transpose A\eta  \geq x\transpose A x & & \forall \eta\in\T_x\mathcal{S}^{n-1}, \|\eta\| = 1.
\end{align*}
Together with first-order conditions, this implies that $x$ is a leftmost eigenvector of~$A$.\footnote{Indeed, any $y\in\mathcal{S}^{n-1}$ can be written as $y = \alpha x + \beta \eta$ with $x\transpose \eta = 0$, $\|\eta\| = 1$ and $\alpha^2 + \beta^2 = 1$; then, $y\transpose A y = \alpha^2 x\transpose A x + \beta^2 \eta\transpose A \eta + 2\alpha\beta \eta\transpose A x$; by first-order condition, $\eta\transpose Ax = (x\transpose A x) \eta\transpose x = 0$, and by second-order condition: $y\transpose A y \geq (\alpha^2+\beta^2)x\transpose A x = x\transpose A x$, hence $x\transpose A x$ is minimal over $\mathcal{S}^{n-1}$.}
This is an example of optimization problem on a manifold for which second-order necessary optimality conditions are also sufficient. This is not the norm.

As another (very) special example, consider the case $\calM = \Rn$; then, $\T_x\Rn = \Rn$, $\Retr_x(\eta) = x+\eta$ is the exponential map (a fortiori a second-order retraction), $\Proj_x$ is the identity, $\grad f(x) = \nabla f(x)$ and $\Hess f(x) = \nabla^2 f(x)$.

\section{Compact submanifolds of Euclidean spaces} \label{apdx:submanifold}

In this appendix, we prove Lemmas~\ref{lem:compactsub1} and~\ref{lem:compactsub2}, showing that if $f$ has locally Lipschitz continuous gradient or Hessian in a Euclidean space $\calE$ (in the usual sense), and it is to be minimized over a compact submanifold of $\calE$, then
\aref{assu:lipschitzpullbackrestricted},~\aref{assu:rtr-regularity-first} and~\aref{assu:rtr-regularity-second} hold.

\begin{proof}[\protect{Proof of Lemma~\ref{lem:compactsub1}}]
	By assumption, $\nabla f$ is Lipschitz continuous along any line segment in $\calE$ joining $x$ and $y$ in $\calM$. Hence,
			there exists $L$ such that, for all $x, y \in \calM$,	
	\begin{align}
	\big| f(y) - \left[ f(x) + \inner{\nabla f(x)}{y-x} \right] \big| \leq \frac{L}{2} \|y-x\|^2.
	\label{eq:compactsubeqfoo}
	\end{align}
	In particular, this holds for all $y = \Retr_x(\eta)$, for any $\eta \in \T_x\calM$. Writing $\grad f(x)$ for the Riemannian gradient of $f|_\calM$ and using that $\grad f(x)$ is the orthogonal projection of $\nabla f(x)$ to $\T_x\calM$~\cite[eq.~(3.37)]{AMS08}, the inner product above decomposes as
	\begin{align}
	\inner{\nabla f(x)}{\Retr_x(\eta)-x} & = \inner{\nabla f(x)}{\eta + \Retr_x(\eta) - x - \eta} \nonumber\\
	& = \inner{\grad f(x)}{\eta} + \inner{\nabla f(x)}{\Retr_x(\eta) - x - \eta}.
	\label{eq:compactsubeqbar}
	\end{align}
	Combining~\eqref{eq:compactsubeqfoo} with~\eqref{eq:compactsubeqbar} and using the triangle inequality yields
	\begin{align*}
	\big| f(\Retr_x(\eta)) - \left[ f(x) + \inner{\grad f(x)}{\eta} \right] \big| & \leq \frac{L}{2} \|\Retr_x(\eta)-x\|^2 + \|\nabla f(x)\| \|\Retr_x(\eta) - x - \eta\|.
	\end{align*}
	Since $\nabla f(x)$ is continuous on the compact set $\calM$, there exists $G$ finite such that $\|\nabla f(x)\| \leq G$ for all $x\in\calM$. It remains to show there exist finite constants $\alpha, \beta \geq 0$ such that, for all $x\in\calM$ and for all $\eta \in \T_x\calM$,
	\begin{align}
	\|\Retr_x(\eta)-x\| & \leq \alpha\|\eta\|, \textrm{ and} \label{eq:Retr0} \\
	\|\Retr_x(\eta) - x - \eta\| & \leq \beta\|\eta\|^2. \label{eq:Retr1}
	\end{align}
	For small $\eta$, this will follow from $\Retr_x(\eta) = x + \eta + \calO(\|\eta\|^2)$ by Definition~\ref{def:retraction}; for large $\eta$ this will follow a fortiori from compactness. This will be sufficient to conclude, as then we will have for all $x\in\calM$ and $\eta\in\T_x\calM$ that
	\begin{align*}
	\big| f(\Retr_x(\eta)) - \left[ f(x) + \inner{\grad f(x)}{\eta} \right] \big| & \leq \left(\frac{L}{2} \alpha^2 + G \beta \right)\|\eta\|^2.
	\end{align*}
	More formally, our assumption that the retraction is defined and smooth over the whole tangent bundle a fortiori ensures the existence of $r>0$ such that $\Retr$ is smooth on $K = \{\eta\in \T\calM: \|\eta\| \leq r\}$, a compact subset of the tangent bundle ($K$ consists of a ball in each tangent space). First, we determine $\alpha$~\eqref{eq:Retr0}.
		For all $\eta\in K$, we have
	\begin{align*}
	\|\Retr_x(\eta) - x \| & \leq \int_0^1 \left\|\ddt \Retr_x(t\eta)\right\| \dt
	= \int_0^1
	\|\D\Retr_x(t\eta)[\eta]\| \dt \\
	& \leq \int_0^1 \max_{\xi\in K} \|\D\Retr(\xi)\| \|\eta\| \dt
	= \max_{\xi\in K} \|\D\Retr(\xi)\| \|\eta\|,
	\end{align*}
	where the $\max$ exists and is finite owing to compactness of $K$ and smoothness of $\Retr$ on $K$; note that this is uniform over both $x$ and $\eta$. (If $\xi\in\T_z\calM$, the notation $\D\Retr(\xi)$ refers to $\D\Retr_z(\xi)$.)
		For all $\eta\notin K$, we have
	\begin{align*}
	\|\Retr_x(\eta) - x \| \leq \mathrm{diam}(\calM) \leq
	\frac{\mathrm{diam}(\calM)}{r} \|\eta\|,
	\end{align*}
	where $\mathrm{diam}(\calM)$ is the maximal distance between any two points on $\calM$: finite by compactness of $\calM$. Combining, we find that~\eqref{eq:Retr0} holds with
	\begin{align*}
	\alpha = \max\left( \max_{\xi\in K} \|\D\Retr(\xi)\|, \frac{\mathrm{diam}(\calM)}{r} \right).
	\end{align*}
			Inequality~\eqref{eq:Retr1} is established along similar lines. For all $\eta\in K$, we have
	\begin{align*}
	\|\Retr_x(\eta) - x - \eta\| & \leq \int_0^1 \left\|\ddt (\Retr_x(t\eta)-x-t\eta) \right\|
	\dt = \int_0^1 \|\D\Retr_x(t\eta)[\eta] - \eta\| \dt \\
	& \leq \int_0^1 \|\D\Retr_x(t\eta)-\Id\| \|\eta\| \dt
	\leq \frac{1}{2} \max_{\xi\in K} \|\D^2\Retr(\xi)\| \|\eta\|^2,
	\end{align*}
	where the last inequality follows from $\D\Retr_x(0_x) = \Id$ and 
	\begin{align*}
	\|\D\Retr_x(t\eta)-\Id\| & \leq \int_0^1 \left\| \dds \D\Retr_x(st\eta) \right\|  \ds \leq  \|t\eta\| \int_0^1 \left\| \D^2\Retr_x(t\eta) \right\|  \ds.
	\end{align*}
	The case $\eta\notin K$ is treated as before:
	\begin{align*}
	\|\Retr_x(\eta) - x - \eta \| \leq \|\Retr_x(\eta) - x\| + \|\eta \|
		\leq \frac{\mathrm{diam}(\calM) + r}{r^2}  \|\eta\|^2.
	\end{align*}
	Combining, we find that~\eqref{eq:Retr1} holds with
	\begin{align*}
	\beta & = \max\left( \frac{1}{2}  \max_{\xi\in K} \|\D^2\Retr(\xi)\| , \frac{\mathrm{diam}(\calM) + r}{r^2}  \right),
	\end{align*}
	which concludes the proof.
																		\end{proof}

We now prove the corresponding second-order result, whose aim is to verify~\aref{assu:rtr-regularity-second}.

\begin{proof}[\protect{Proof of Lemma~\ref{lem:compactsub2}}]
	By assumption, $\nabla^2 f$ is Lipschitz continuous along any line segment in $\calE$ joining $x$ and $y$ in $\calM$. Hence, there exists $L$ such that, for all $x, y \in \calM$,	
	\begin{align}
	\left| f(y) - \left[ f(x) + \inner{\nabla f(x)}{y-x} + \frac{1}{2}\inner{y-x}{\nabla^2 f(x)[y-x]} \right] \right| \leq \frac{L}{6} \|y-x\|^3.
	\label{eq:compactsub2base}
	\end{align}
	Fix $x\in\calM$. Let $\Proj_x$ denote the orthogonal projector from $\calE$ to $\T_x\calM$. Let $\grad f(x)$ be the Riemannian gradient of $f|_\calM$ at $x$ and let $\Hess f(x)$ be the Riemannian Hessian of $f|_\calM$ at $x$ (a symmetric operator on $\T_x\calM$). For Riemannian submanifolds of Euclidean spaces we have these explicit expressions with $\eta\in\T_x\calM$---see~\citep[eqs.~(3.37), (5.15), Def.~(5.5.1)]{AMS08} and~\citep{absil2013extrinsic}:
	\begin{align*}
	\grad f(x) & = \Proj_x \nabla f(x), \textrm{ and} \\
	\inner{\eta}{\Hess f(x)[\eta]} & = \inner{\eta}{ \D\big( x \mapsto \Proj_x \nabla f(x) \big)(x)[\eta]} \\
	& = \inner{\eta}{\Big( \D\big( x \mapsto \Proj_x \big)(x)[\eta]\Big)[\nabla f(x)] + \Proj_x \nabla^2f(x)[\eta]} \\
	& = \inner{\II(\eta,\eta)}{\nabla f(x)} + \inner{\eta}{\nabla^2 f(x)[\eta]},
	\end{align*}
	where $\II$, as implicitly defined above, is the \emph{second fundamental form} of $\calM$: $\II(\eta, \eta)$ is a normal vector to the tangent space at $x$, capturing the second-order geometry of $\calM$---see~\citep{absil2009roadsnewton,absil2013extrinsic,monera2014taylorexp} for presentations relevant to our setting. In particular, $\II(\eta,\eta)$ is the acceleration in $\calE$ at $x$ of a geodesic $\gamma(t)$ on $\calM$ defined by $\gamma(0) = x$ and $\dot\gamma(0) = \eta$: $\ddot\gamma(0) = \II(\eta,\eta)$~\citep[Cor.~4.9]{oneill}.
	
	Let $\eta \in \T_x\calM$ be arbitrary; $y = \Retr_x(\eta) \in \calM$. Then,
	\begin{align*}
	\inner{\nabla f(x)}{y-x} - \inner{\grad f(x)}{\eta} & = \inner{\nabla f(x)}{y - x - \eta} \textrm{ and} \\
	\inner{y-x}{\nabla^2 f(x)[y-x]} - \inner{\eta}{\Hess f(x)[\eta]} & = 2\inner{\eta}{\nabla^2 f(x)[y-x-\eta]} \\ & + \inner{y-x-\eta}{\nabla^2 f(x)[y-x-\eta]} \\ & - \inner{\nabla f(x)}{\II(\eta,\eta)}.
	\end{align*}
	Since $\calM$ is compact and $f$ is twice continuously differentiable, there exist $G, H$, independent of $x$, such that $\|\nabla f(x)\| \leq G$ and $\|\nabla^2 f(x)\| \leq H$ (the latter is the induced operator norm). Combining with~\eqref{eq:compactsub2base} and using the triangle and Cauchy--Schwarz inequalities multiple times,
	\begin{multline*}
	\left| f(y) - \left[ f(x) + \inner{\grad f(x)}{\eta} + \frac{1}{2}\inner{\eta}{\Hess f(x)[\eta]} \right] \right| \\ \leq \frac{L}{6} \|y-x\|^3 + G\left\|y-x-\eta-\frac{1}{2}\II(\eta,\eta)\right\| + H\|\eta\|\|y-x-\eta\| + \frac{1}{2} H \|y-x-\eta\|^2.
	\end{multline*}
	Using the same argument as for Lemma~\ref{lem:compactsub1}, we can find finite constants $\alpha,\beta$ independent of $x$ and $\eta$ such that~\eqref{eq:Retr0} and~\eqref{eq:Retr1} hold. Use $\|y-x-\eta\|^2 \leq \|y-x-\eta\| \left( \|y-x\| + \|\eta\| \right) \leq \beta(\alpha+1)\|\eta\|^3$ to upper bound the right hand side above with
	\begin{align*}
	\left(\frac{L}{6}\alpha^3 + H\beta + \frac{H\beta(\alpha+1)}{2}\right) \|\eta\|^3 + G\left\|y-x-\eta-\frac{1}{2}\II(\eta,\eta)\right\|.
	\end{align*}
	We turn to the last term. Consider $K \subset \T\calM$ as defined in  the proof of Lemma~\ref{lem:compactsub1} for some $r>0$. If $\eta \notin K$, i.e., $\|\eta\| > r$, then, since $\II$ is bilinear for a fixed $x\in\calM$, we can define
	\begin{align*}
		\|\II\| = \max_{x \in \calM} \max_{\xi \in \T_x\calM, \|\xi\| \leq 1} \|\II(\xi, \xi)\|
	\end{align*}
	(finite by continuity and compactness) so that $\|\II(\eta, \eta)\| \leq \|\II\| \|\eta\|^2$.
	Then,
	\begin{align*}
		\left\|y-x-\eta-\frac{1}{2}\II(\eta,\eta)\right\| & \leq \|y-x\| + \|\eta\| + \frac{1}{2} \|\II(\eta, \eta)\| \leq \left( \frac{\operatorname{diam}(\calM)}{r^3} + \frac{1}{r^2} + \frac{1}{2} \frac{\|\II\|}{r} \right) \|\eta\|^3.
	\end{align*}
	Now assume $\eta \in K$, that is, $\|\eta\| \leq r$. Consider $\phi(t) = \Retr_x(t\eta)$ (a curve on $\calM$) and let $\phi''$ denote its acceleration on $\calM$ and $\ddot \phi$ denote its acceleration in $\calE$, while $\dot \phi = \phi'$ denotes velocity along the curve. It holds that $\ddot \phi(t) = \phi''(t) + \II(\dot \phi(t), \dot \phi(t))$~\citep[Cor.~4.9]{oneill}. Since $\Retr$ is a second-order retraction, acceleration on $\calM$ is zero at $t=0$, i.e., $\phi''(0) = 0$, 	so that $\phi(0) = x$, $\dot \phi(0) = \eta$ and $\ddot \phi(0) = \II(\eta, \eta)$. Then, by Taylor expansion of $\phi$ in $\calE$,
	\begin{align*}
	y = \Retr_x(\eta) = \phi(1) = x + \eta + \frac{1}{2} \II(\eta,\eta) + R_3(\eta), 	\end{align*}
	where
	\begin{align*}
		\| R_3(\eta) \| & = \left\| \int_{0}^{1} \frac{(1-t)^2}{2} \dddot \phi(t) \dt \right\| \leq \frac{1}{6} \max_{\xi\in K} \| \D^3 \Retr(\xi) \| \|\eta\|^3.
	\end{align*}
    The combined arguments ensure existence of a constant $\gamma$, independent of $x$ and $\eta$, such that
	\begin{align*}
	\left\|y-x-\eta-\frac{1}{2}\II(\eta,\eta)\right\| \leq \gamma\|\eta\|^3.
	\end{align*}
	Combining, we find that for all $x\in\calM$ and $\eta\in\T_x\calM$,
	\begin{align*}
	\left| f(\Retr_x(\eta)) - \left[ f(x) + \inner{\grad f(x)}{\eta} + \frac{1}{2}\inner{\eta}{\Hess f(x)[\eta]} \right] \right| & \leq \left(\frac{L}{6}\alpha^3 + \frac{H\beta(\alpha+3)}{2} + \gamma\right) \|\eta\|^3.
	\end{align*}
	Since $\Retr$ is a second-order retraction, $\Hess f(x)$ coincides with the Hessian of the pullback $f\circ\Retr_x$ (Lemma~\ref{lem:scndretr}). This establishes~\aref{assu:rtr-regularity-second}.
\end{proof}

\section{Proof of Lemma~\ref{lem:armijo} about Armijo line-search} \label{apdx:lem:armijo}

\begin{proof}[\protect{Proof of Lemma~\ref{lem:armijo}}]
	By \aref{assu:lipschitzpullbackrestricted}, upper bound~\eqref{eq:upperboundpullback} holds with $\eta = t\eta_k^0$ for any $t$ such that $\|\eta\| \leq \varrho_k$:
	\begin{align}
	f(x_k) - f(\Retr_{x_k}(t \cdot \eta_k^0)) \geq t\inner{-\grad f(x_k)}{\eta_k^0} - \frac{Lt^2}{2} \|\eta_k^0\|^2.
	\label{eq:armijofoo}
	\end{align}
	We determine a sufficient condition on $t$ for the stopping criterion in Algorithm~\ref{algo:armijo} to trigger. To this end, observe that the right hand side of~\eqref{eq:armijofoo} dominates $c_1 t \inner{-\grad f(x_k)}{\eta_k^0}$ if
	$$
	t(1-c_1) \cdot \inner{-\grad f(x_k)}{\eta_k^0} \geq  \frac{Lt^2}{2} \|\eta_k^0\|^2.
	$$
	Thus, the stopping criterion in Algorithm~\ref{algo:armijo} is satisfied in particular for all $t$ in
	$$
	\left[0, \frac{2(1-c_1)\inner{-\grad f(x_k)}{\eta_k^0}}{\L\|\eta_k^0\|^2}\right] \supseteq \left[ 0, \frac{2c_2 (1-c_1)\|\grad f(x_k)\|}{\L\|\eta_k^0\|}\right] \supseteq \left[ 0, \frac{2c_2 (1-c_1)}{c_4\L}\right].
	$$
	Unless it equals ${\bar t}_k$, the returned $t$ cannot be smaller than $\tau$ times the last upper bound. In all cases, the cost decrease satisfies
	\begin{align*}
	f(x_k) - f(\Retr_{x_k}(t \cdot \eta_k^0)) & \geq c_1 t \inner{-\grad f(x_k)}{\eta_k^0} \\
	& \geq c_1c_2t\|\grad f(x_k)\|\|\eta_k^0\| \\
	& \geq c_1c_2c_3t\|\grad f(x_k)\|^2.
	\end{align*}
	To count the number of iterations, consider that checking whether $t = {\bar t}_k$ satisfies the stopping criterion requires one cost evaluation. Following that, $t$ is reduced by a factor $\tau$ exactly $\log_\tau(t/{\bar t}_k) = \log_{\tau^{-1}}({\bar t}_k/t)$ times, each followed by one cost evaluation.
\end{proof}

\section{Proofs for Section~\ref{sec:HandHess} about $H_k$ and the Hessians}\label{apdx:Hfirstretr}

\begin{proof}[\protect{Proof of Lemma~\ref{lem:scndretr}}]
The Hessian of $f$ and that of the pullback are related by the following formulas. See~\citep[\S5]{AMS08} for the precise meanings of the differential operators~$\mathrm{D}$ and~$\mathrm{d}$. For all $\eta$ in $\T_{x}\calM$, writing $\hat f_x = f\circ\Retr_x$ for convenience,
\begin{align*}
\frac{\mathrm{d}}{\mathrm{d}t} f(\Retr_{x}(t\eta)) & = \inner{\grad f(\Retr_{x}(t\eta))}{\frac{\mathrm{D}}{\mathrm{d}t} \Retr_x(t\eta)}, \\
\inner{\nabla^2 \hat f_x(0_{x})[\eta]}{\eta} & = \left. \frac{\mathrm{d}^2}{\mathrm{d}t^2} f(\Retr_{x}(t\eta)) \right|_{t=0} \\
& = \inner{\Hess f(x)\left[\mathrm{D}\Retr_x(0_x)[\eta]\right]}{\left. \frac{\mathrm{D}}{\mathrm{d}t} \Retr_x(t\eta) \right|_{t=0}} \\
& \quad + \inner{\grad f(x)}{\left. \frac{\mathrm{D}^2}{\mathrm{d}t^2} \Retr_x(t\eta) \right|_{t=0}} \\
& = \inner{\Hess f(x)[\eta]}{\eta} + \inner{\grad f(x)}{\left. \frac{\mathrm{D}^2}{\mathrm{d}t^2} \Retr_x(t\eta) \right|_{t=0}}.
\end{align*}
(To get the third equality, it is assumed one is working with the Levi--Civita connection, so that $\Hess f$ is indeed the Riemannian Hessian.) 
Since the acceleration of the retraction is bounded, we get the result via Cauchy--Schwarz.
\end{proof}

\begin{proof}[\protect{Proof of Proposition~\ref{prop:relateHandHess}}]
	Combine $\|\grad f(x_k)\| \leq \varepsilon_g$ and $H_k \succeq -\varepsilon_H \Id$ with
	\begin{align*}
	\left\| \Hess f(x_k) - \nabla^2 \hat f_{x_k}(0_{x_k}) \right\| & \leq a_k \cdot \|\grad f(x_k)\| & \textrm{ and } & & \left\|\nabla^2 \hat f_k(0_{x_k}) - H_k\right\| & \leq \delta_k
	\end{align*}
	by triangular inequality.
\end{proof}

\section{Complexity dependence on $n$ in the Max-Cut example}
\label{apdx:examplecomplexity}

This appendix supports Section~\ref{sec:example}. By Proposition~\ref{prop:maxcutapprox}, running Algorithm~\ref{algo:RTRscnd} with $\varepsilon_g = \infty$ and $\varepsilon_H = \frac{2\delta}{n}$ yields a solution $Y$ within a gap $\delta$ from the optimal value of~\eqref{eq:BM}. Let $\underline{f}$ and $\overline{f}$ denote the minimal and maximal values of $f(Y) = \inner{C}{YY\transpose}$ over $\calM$~\eqref{eq:calMmaxcut}, respectively, with metric $\inner{A}{B} = \trace(A\transpose B)$ and associated Frobenius norm $\frobnormsmall{\cdot}$. Then, using $\rho' = 1/10$, setting $c_3 = 1/2$ in~\aref{assu:rtr-eigenstep} as allowed by Lemma~\ref{lem:EigenStep} and using the true Hessian of the pullbacks for $H_k$ so that $c_1 = 0$ in~\aref{assu:rtr-Hk-second}, Theorem~\ref{thm:rtr-scnd-main} guarantees Algorithm~\ref{algo:RTRscnd} returns an answer in at most
\begin{align}
	214(\overline{f} - \underline{f}) \cdot \LH^2 \cdot \frac{1}{\varepsilon_H^3} \textrm{ + log term}
\end{align}
iterations. Using the $LDL\transpose$--factorization strategy of Lemma~\ref{lem:EigenStep} with a randomly generated orthonormal basis at each tangent space encountered, since $\dim \calM = n^2$ for $p = n+1$, the cost of each iteration is $\calO(n^6)$ arithmetic operations (dominated by the cost of the $LDL\transpose$ factorization). It remains to bound $\LH$, in compliance with~\aref{assu:rtr-regularity-second}.

Let $g \colon \reals \to \reals$ be defined as $g(t) = f(\Retr_Y(t\dot Y))$. Then, using a Taylor expansion,
\begin{align}
	f(\Retr_Y(\dot Y)) = g(1) = g(0) + g'(0) + \frac{1}{2} g''(0) + \frac{1}{6} g'''(t)
\end{align}
for some $t\in(0, 1)$.
Let $\hat f_Y = f \circ \Retr_Y$. Definition~\ref{def:retraction} for retractions implies
\begin{align}
	g(0) & = f(Y), & g'(0) & = \inner{\grad f(Y)}{\dot Y}, & g''(0) & = \inner{\dot Y}{\nabla^2 \hat f_Y(0_Y)[\dot Y]}, \end{align}
so that it only remains to bound $|g'''(t)|$ uniformly over $Y, \dot Y$ and $t \in [0, 1]$.

For this example, it is easier to handle $g'''$ if the retraction used is the exponential map (similar bounds can be obtained with the orthogonal projection retraction, see~\citep[Lemmas~4 and 5]{mei2017solvingSDPs}). This map is known in explicit form and is cheap to compute for the sphere $\mathbb{S}^{n} = \{ x \in \reals^{n+1} : x\transpose x = 1 \}$. Indeed, if $x \in \mathbb{S}^{n}$ and $\eta \in \T_x\mathbb{S}^n$, following~\citep[Ex.~5.4.1]{AMS08},
\begin{align}
	\gamma(t) = \Exp_x(t\eta) & = \cos(t\|\eta\|) x + \sin(t\|\eta\|) \frac{1}{\|\eta\|} \eta.
\end{align}
Conceiving of $\gamma$ as a map from $\reals$ to $\reals^{n+1}$, its differentials are easily derived:
\begin{align}
	\dot \gamma(t) & = -\|\eta\|\sin(t\|\eta\|) x + \cos(t\|\eta\|) \eta, & \ddot \gamma(t) & = - \|\eta\|^2 \gamma(t), & \dddot \gamma(t) & = - \|\eta\|^2 \dot \gamma(t).
\end{align}
Extending this map row-wise gives the exponential map for~$\calM$---of course, this is a second-order retraction. We define $\Phi(t) = \Retr_Y(t\dot Y)$ and $g(t) = f(\Retr_Y(t \dot Y)) = \inner{C\Phi(t)}{\Phi(t)}$. In particular, $\ddot \Phi(t) = -D\Phi(t)$ and $\dddot \Phi(t) = -D\dot\Phi(t)$, where $D = \diag(\|\dot y_1\|^2, \ldots, \|\dot y_n\|^2)$ and $\dot y_k\transpose$ is the $k$th row of $\dot Y$. As a result, for a given $Y$ and $\dot Y$, a little bit of calculus gives:
\begin{align}
	g'''(t) & = -6\inner{C\dot\Phi(t)}{D\Phi(t)} - 2\inner{C\Phi(t)}{D\dot\Phi(t)}.
\end{align}
Using Cauchy--Schwarz multiple times, as well as the inequality $\frobnormsmall{AB} \leq \opnorm{A}\frobnormsmall{B}$ where $\opnorm{A}$ denotes the largest singular value of $A$, and using that $\frobnormsmall{\Phi(t)} = \sqrt{n}$ and $\frobnormsmall{\dot \Phi(t)} = \frobnormsmall{\dot Y}$ for all $t$, and additionally that $\opnorm{D} \leq \trace(D) = \frobnormsmall{\dot Y}^2$, it follows that
\begin{align}
	\sup_{Y\in\calM, \dot{Y}\in\T_Y\calM, \dot Y\neq 0, t\in(0, 1)} \frac{|g'''(t)|}{\frobnormsmall{\dot Y}^3} \leq 8\opnorm{C}\sqrt{n}.
\end{align}
As a result, an acceptable constant $\LH$ for~\aref{assu:rtr-regularity-second} is $\LH = 8\opnorm{C} \sqrt{n}.$ 

Combining all statements of this section, it follows that a solution $Y$ within an absolute gap $\delta$ of the optimal value can be obtained for problem~\eqref{eq:BM} using Algorithm~\ref{algo:RTRscnd} in at most $\calO\left( (\overline{f} - \underline{f}) \opnorm{C}^2 \cdot n^{10} \cdot \frac{1}{\delta^3} \right)$ arithmetic operations, neglecting the additive logarithmic term.

Note that, following~\citep[Appendix~A.2, points 1 and 2]{mei2017solvingSDPs}, it is also possible to bound $\LH$ as $6\opnorm{C} + 2\|C\|_1$, where $\|\cdot\|_1$ is the $\ell_1$ operator norm. This reduces the explicit dependence on $n$ from $n^{10}$ to $n^9$ in the bound on the total amount of work.

\end{document}